\newcommand{\hookdownarrow}{\mathrel{\rotatebox[origin=c]{-90}{$\hookrightarrow$}}}
\newtheorem{lem}{Lemma}[section]
\newtheorem{teo}[lem]{Theorem}
\newtheorem{pro}[lem]{Proposition}
\newtheorem{cor}[lem]{Corollary}
\newtheorem{claim}[lem]{Claim}
\newtheorem*{con*}{Conjecture}
\newtheorem{Conj}{Conjecture}
\newtheorem{Question}[Conj]{Question}
\newtheorem{Questions}[Conj]{Questions}
\theoremstyle{definition}
\newtheorem{exa}[lem]{Example}
\theoremstyle{remark}
\newtheorem*{rem*}{Remark}
\newcommand{\argu}{\hbox to 7truept{\hrulefill}}
\DeclareMathOperator{\B}{Bohr}
\DeclareMathOperator{\A}{A}
\DeclareMathOperator{\adj}{adj}
\DeclareMathOperator{\SL}{SL}\DeclareMathOperator{\GL}{GL}
\DeclareMathOperator{\im}{Im}
\DeclareMathOperator{\Hom}{Hom}
\DeclareMathOperator{\Aut}{Aut}
\DeclareMathOperator{\Spin}{Spin}
\DeclareMathOperator{\Mat}{Mat}
\DeclareMathOperator{\PGL}{PGL}
\DeclareMathOperator{\SO}{SO}
\DeclareMathOperator{\Max}{Max}
\newcommand{\R}{\mathbb{R}}
\newcommand{\Z}{\mathbb{Z}}
\newcommand{\F}{\mathbb{F}}
\newcommand{\N}{\mathbb{N}}
\newcommand{\CC}{\mathbb{C}}
\newcommand{\Q}{\mathbb{Q}}
\newcounter{marcocomments}
\newcounter{andreicomments}
 \date{\today}
\begin{document}

\title[Some remarks on Grothendieck pairs]
{Some remarks on Grothendieck pairs\vspace{0.5cm}
\rightline{ \tiny \it \underline
{to Slava Grigorchuk}} 
\rightline{ \tiny \it \underline{with admiration and affection}}
}
\author{Andrei Jaikin-Zapirain}
 
 \address{Departamento de Matem\'aticas, Universidad Aut\'onoma de Madrid \and  Instituto de Ciencias Matem\'aticas, CSIC-UAM-UC3M-UCM}
\email{andrei.jaikin@icmat.es}

\author{Alexander  Lubotzky}
\address{Weizmann Institute of Science, Rehovot, Israel}    \email{alex.lubotzky@mail.huji.ac.il}

\begin{abstract}

We revisit the paper of Alexander Grothendieck where he introduced  Grothendieck pairs and discuss the relation between profinite rigidity and  left/right Grothendieck rigidity. We also show that   various groups are left and/or right Grothendieck rigid and, in particular, all ascending HNN extensiona  of finitely generated free groups are  right Grothendieck  rigid. Along the way we present a number of questions and suggestions for further research.
 
\end{abstract}

\maketitle

\section{Introduction}
Let $\phi:\Lambda \hookrightarrow \Gamma$ be an embedding between two finitely generated residually finite groups. We say that $\phi$ is a {\bf Grothendieck pair} if the induced homomorphism between the profinite completions $\widehat \phi: \widehat \Lambda \to \widehat \Gamma$ is an isomorphism We say that a Grothendieck pair is {\bf trivial} if $\phi$ is an isomorphism, i.e., $\phi(\Lambda)=\Gamma$.

Grothendieck pairs were introduced in \cite{Gr70} by Grothendieck.  His paper became well-known because of a proposed  question as to whether a Grothendieck pair of two finitely presented groups is always trivial. 
The first example of finitely generated non-trivial Grothendieck pairs were given by Platonov and Tavgen\textquotesingle\  \cite{PT86}. This has been followed  by many other constructions (\cite{BL00,Py04, KS23}), and eventually  Bridson and Grunewald \cite{BG04} gave an example were both groups were finitely presented. Moreover, the paper of Grothendieck proposed another conjecture concerning  the groups $\mathrm {cl}_A(\Gamma)$ which could lead to proving the main conjecture. However, this conjecture also  turned out to be wrong (see \cite{Lu79, LV19} for more details). Much work to build more counterexamples has overshadowed some of Grothendieck's remarkable results obtained in his paper. In this article we want to revisit  them and put them in the context of the recent ongoing work on profinite rigidity.

Recall that a  finitely generated residually finite group $\Lambda$ is called {\bf profinitely rigid} if for any finitely generated residually finite group $\Gamma$ whose profinite competion is isomoprhic to the one of $\Lambda$, $\Gamma\cong \Lambda$.
A finitely generated residually finite group $\Lambda$ will be called 
{\bf   left Grothendieck  rigid} ({\textbf L}GR for short) if whenever $\phi:\Lambda \hookrightarrow \Gamma$ is a
 Grothendieck pair,  $\phi$ is an isomorphism. Similarly, $\Gamma$ is called {\bf   right Grothendieck  rigid}  ({\textbf R}GR for short) if whenever 
$\phi:\Lambda \hookrightarrow \Gamma$ is a Grothendieck pair, it is trivial.
The notion of right Grothendieck rigid has previously  appeared in the literature as Grothendick rigid (see, for example, \cite{ReAndrews, ReICM}). We present  examples of {\textbf L}GR and {\textbf R}GR groups  in Sections \ref{GLRsection} and  \ref{LRGR} and in Sections   \ref{LRGR} and \ref{profinite}   we also discuss  relations between these three notions. Among  the new results we will show that the mapping tori of a finitely generated free group is {\textbf R}GR (see   Section \ref{HNN}).

In his paper Grothendieck introduced the   following class  $\mathcal C$ of groups: $G\in  \mathcal C$ if for any Grothendieck pair $\phi:\Lambda \hookrightarrow \Gamma$, the induced map 
$$\phi^{\#}_G: \Hom (\Gamma, G)\to \Hom (\Lambda, G), \ \tau\mapsto \tau\circ \phi,$$  is a bijection. The following theorem summarizes the results of Grothendieck that we want to emphasize.
\begin{teo}\label{grothendieck}
The following holds.

\begin{enumerate}
\item The class $\mathcal C$  is closed under comensurabilty, inverse limits and direct products and contains all nilpotent groups.
\item If $A$ is a commutative  ring and $G$ is an affine group scheme of finite type over $A$ then $G(A)\in \mathcal C$.
\item Compact Hausdorff  groups are in $\mathcal C$.
\item Let $\Lambda$ be a  finitely generated residually finite group. If $\Lambda \in \mathcal C$, then $\Lambda$ is {\textbf L}GR.

\end{enumerate}
\end{teo}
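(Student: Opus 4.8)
My plan is to handle the four parts largely separately, after first recording two facts about an arbitrary Grothendieck pair $\phi\colon\Lambda\hookrightarrow\Gamma$. \textbf{(a)} A finite-index subgroup of a finitely generated group carries, inside the profinite completion, its own profinite topology; hence through the isomorphism $\widehat\phi$ one gets an index-preserving and normality-preserving bijection $\Gamma_0\mapsto\phi^{-1}(\Gamma_0)$ between the finite-index subgroups of $\Gamma$ and those of $\Lambda$, with each restricted map $\phi\colon\phi^{-1}(\Gamma_0)\hookrightarrow\Gamma_0$ again a Grothendieck pair and $\phi$ inducing an isomorphism on every common finite quotient. \textbf{(b)} $\phi$ induces an isomorphism $\Lambda^{\mathrm{ab}}\to\Gamma^{\mathrm{ab}}$: indeed $\widehat{\Lambda^{\mathrm{ab}}}\cong(\widehat\Lambda)^{\mathrm{ab}}$ is a finitely generated abelian group, the functor $(-)\otimes_{\Z}\widehat\Z$ computes profinite completion and is exact on finitely generated abelian groups (as $\widehat\Z$ is $\Z$-flat), and a finitely generated abelian group with trivial profinite completion vanishes, so $\phi^{\mathrm{ab}}$ has trivial kernel and cokernel.

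For part (1): closure under direct products and inverse limits is immediate because $\Hom(\Gamma,-)$ commutes with both, and finite groups $F$ lie in $\mathcal C$ since $\Hom(\Gamma,F)=\Hom_{\mathrm{cts}}(\widehat\Gamma,F)$ naturally. For commensurability I would argue in two steps. If $G_0\le G$ has finite index and $G\in\mathcal C$, then any $\bar\sigma\colon\Gamma\to G$ extending some $\tau\colon\Lambda\to G_0$ has $\bar\sigma^{-1}(G_0)$ corresponding under (a) to $\phi^{-1}(\bar\sigma^{-1}(G_0))=\tau^{-1}(G_0)=\Lambda$, hence of index $1$, so $\bar\sigma$ maps into $G_0$; and injectivity of $\phi^{\#}_{G_0}$ is inherited from $\phi^{\#}_{G}$. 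If instead $G_0\in\mathcal C$ has finite index in $G$, replace $G_0$ by its normal core (still in $\mathcal C$ by the first step) so that $G_0\trianglelefteq G$ with $Q=G/G_0$ finite; given $\tau\colon\Lambda\to G$, extend $\bar\tau\colon\Lambda\to G\to Q$ to the unique $\bar\sigma\colon\Gamma\to Q$, note $\bar\sigma(\Gamma)=\bar\tau(\Lambda)$ by uniqueness, put $\Gamma_0=\ker\bar\sigma$, $\Lambda_0=\phi^{-1}(\Gamma_0)=\ker\bar\tau$, extend $\tau|_{\Lambda_0}\colon\Lambda_0\to G_0$ to the unique $\sigma_0\colon\Gamma_0\to G_0$, and glue: uniqueness forces $\sigma_0$ to be equivariant under conjugation by $\phi(\Lambda)$, which is exactly what makes $\gamma_0\,\phi(\lambda)\mapsto\sigma_0(\gamma_0)\,\tau(\lambda)$ a well-defined homomorphism $\Gamma\to G$ extending $\tau$ (using $\Gamma=\Gamma_0\,\phi(\Lambda)$), and uniqueness of this extension is similar. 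Finally, for nilpotent $G$ of class $c$ the map $\Hom(-,G)$ factors through the quotient by the $(c+1)$-st lower central term, so it suffices that $\phi$ induces isomorphisms $\Lambda/\gamma_k(\Lambda)\to\Gamma/\gamma_k(\Gamma)$; each of these is a homomorphism of finitely generated nilpotent groups inducing an isomorphism of profinite completions (as $\widehat\phi$ descends), and such a homomorphism is an isomorphism, which I would prove by induction on nilpotency class with base case (b) and inductive step using the last nonzero lower central term together with the same flatness argument. I expect the finite-index-overgroup case of commensurability to be the only genuinely delicate bookkeeping.

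For part (2): for a finitely generated group $H$ the functor $B\mapsto\Hom(H,G(B))$ on $A$-algebras is represented by a finitely generated $A$-algebra $R_H=R_H(G,A)$ (cut $G^{\times k}$, $k$ the number of generators of $H$, by the relators); $\phi$ induces an $A$-algebra homomorphism $\phi_*\colon R_\Lambda\to R_\Gamma$, and $\phi^{\#}_{G(A)}$ is precomposition with $\phi_*$ on $\Hom_{A\text{-alg}}(-,A)$, so it suffices to prove $\phi_*$ is an isomorphism. When $A$ is finitely generated over $\Z$ this becomes a statement about finitely generated $\Z$-algebras: for every finite ring $B$ the map $\Hom(R_\Gamma,B)\to\Hom(R_\Lambda,B)$ is a bijection, since it decomposes over the ring maps $A\to B$ into copies of $\Hom(\Gamma,G(B))\to\Hom(\Lambda,G(B))$ with $G(B)$ a finite group and finite groups lie in $\mathcal C$; and a homomorphism of finitely generated $\Z$-algebras inducing a bijection on points over every finite ring is an isomorphism. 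For general $A$ one reduces to this case: a given $\tau\colon\Lambda\to G(A)$ factors through the $A_0$-points of a model of $G$ over a finitely generated subring $A_0\subseteq A$, to which the previous case applies, and base change finishes it (uniqueness being handled by enlarging $A_0$). The hard part will be the cited lemma on finitely generated $\Z$-algebras: injectivity is easy (a nonzero element survives in some finite quotient ring, and every ring map to that quotient lifts), while surjectivity requires that over each closed point the morphism be bijective and an isomorphism on completed local rings, information one extracts from the bijections on points over all finite rings.

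For parts (3) and (4) I would argue briefly. A compact Hausdorff group is an inverse limit of compact Lie groups, and every compact Lie group is $\mathbf G(\R)$ for some affine group scheme $\mathbf G$ of finite type over $\R$ (a compact subgroup of $U(n)$ is Zariski closed there); so compact Hausdorff groups lie in $\mathcal C$ by part (2) and closure under inverse limits from part (1), giving (3). For (4), apply the defining property of $\mathcal C$ with $G=\Lambda$: surjectivity of $\phi^{\#}_\Lambda\colon\Hom(\Gamma,\Lambda)\to\Hom(\Lambda,\Lambda)$ yields $\psi\colon\Gamma\to\Lambda$ with $\psi\circ\phi=\mathrm{id}_\Lambda$; then $\widehat\psi\circ\widehat\phi=\mathrm{id}$, so $\widehat\psi=\widehat\phi^{-1}$ is an isomorphism, whence $\ker\psi$ lies in the kernel of $\Gamma\to\widehat\Gamma$, which is trivial as $\Gamma$ is residually finite; thus $\psi$, and therefore $\phi$, is an isomorphism, i.e. $\Lambda$ is \textbf{L}GR.
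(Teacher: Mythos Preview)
Your arguments for parts (1), (3), and (4) are correct and run parallel to the paper's: the paper's Proposition~3.1 handles commensurability with a uniform subgroup $\Gamma_1$ (the intersection of all subgroups of index at most $|G:G_0|$) in place of your $\tau$-dependent $\Gamma_0=\ker\bar\sigma$, but the gluing step is identical; for nilpotent groups the paper cites \cite{PT90} rather than running your induction, which is fine and self-contained provided you use that in a finitely generated nilpotent group the profinite topology induces the full profinite topology on each $\gamma_c$; parts (3) and (4) match the paper's Corollary~3.4 and Proposition~3.5 essentially verbatim.

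Part (2) is where you genuinely diverge. The paper (Proposition~3.3) builds the extension $\tau$ directly as a map $\Gamma\to G(B)$ for the subring $B\subseteq\widehat A$ generated by the coordinates of $\psi_{\widehat A}(\widehat\phi^{-1}(\Gamma))$, and then shows $B=A$ by localizing at a maximal ideal $\mathbf m$, passing to the $\mathbf m$-adic completion $A_{\widehat{\mathbf m}}$, and invoking faithfully flat descent (Corollary~2.2: the equalizer of the two maps $A_{\widehat{\mathbf m}}\rightrightarrows A_{\widehat{\mathbf m}}\otimes_{A_{\mathbf m}}A_{\widehat{\mathbf m}}$ is $A_{\mathbf m}$). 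Your route via representability of $B\mapsto\Hom(H,G(B))$ and the lemma ``a map of finitely generated $\Z$-algebras inducing bijections on all finite-ring points is an isomorphism'' is valid and conceptually cleaner, but be aware that your sketch of the lemma hides exactly the same descent step. Knowing that $R_{\widehat{\mathbf m}}\cong S_{\widehat{\mathbf n}}$ at each closed point does not by itself yield $R_{\mathbf m}=S_{\mathbf n}$; one still has to descend. A clean completion: the bijection on finite points forces $\phi_*\colon R_\Lambda\to R_\Gamma$ to be a ring epimorphism (the two maps $R_\Gamma\rightrightarrows R_\Gamma\otimes_{R_\Lambda}R_\Gamma$ agree after every map to a finite ring, and $R_\Gamma\otimes_{R_\Lambda}R_\Gamma$ is finitely generated over $\Z$, hence residually finite); then for each maximal $\mathbf m\subset R_\Lambda$ the unique lift $R_\Gamma\to (R_\Lambda)_{\widehat{\mathbf m}}$ lands in $(R_\Lambda)_{\mathbf m}$ by the paper's Corollary~2.2, giving a retraction of $(R_\Lambda)_{\mathbf m}\hookrightarrow (R_\Gamma)_{\mathbf m}$, and an epimorphism with a section is an isomorphism. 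So both approaches meet at the same faithfully-flat-descent core; yours packages it more functorially, the paper's more concretely.
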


Note that part (2) of the theorem is a remarkable super-rigidity result, implying that for every Grothendieck pair $\Lambda\hookrightarrow \Gamma$, every finite dimensional representation of $\Lambda$ can be extended to $\Gamma$. This was a crucial ingredient in \cite{BL00}.
Here are some consequences of Grothendieck's theorem.
 \begin{cor}\label{leftGR}
 The following groups are left  Grothendieck rigid:
 
 \begin{enumerate}
 \item finitely generated free groups;
 \item surface groups;
 \item $S$-arithmetic groups.
 \end{enumerate}
 \end{cor}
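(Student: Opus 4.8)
The plan is to reduce everything to Theorem~\ref{grothendieck}. By part~(4) it suffices to prove that each of the listed groups belongs to Grothendieck's class $\mathcal{C}$, and for this I will use only that $\mathcal{C}$ contains all nilpotent groups and is closed under commensurability (part~(1)) together with the fact that $G(A)\in\mathcal{C}$ whenever $G$ is an affine group scheme of finite type over a commutative ring $A$ (part~(2)).

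The cases of finitely generated free groups and of $S$-arithmetic groups are then essentially bookkeeping. If $\Gamma$ is $S$-arithmetic it is, by definition, commensurable with $\mathbf{G}(K)\cap\GL_n(\mathcal{O}_{K,S})$ for a linear algebraic group $\mathbf{G}\leq\GL_n$ over a number field $K$ and a ring $\mathcal{O}_{K,S}$ of $S$-integers; after spreading out, this intersection equals $\mathbf{H}(\mathcal{O}_{K,S})$ for a suitable affine $\mathcal{O}_{K,S}$-group scheme $\mathbf{H}$ of finite type, which lies in $\mathcal{C}$ by part~(2), so $\Gamma\in\mathcal{C}$ by part~(1). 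A free group of rank $\leq 1$ is nilpotent, hence in $\mathcal{C}$; for rank $\geq 2$, observe that $\SL_2(\Z)=\SL_2(\Z)\in\mathcal{C}$ by part~(2) (with $A=\Z$, $G=\SL_2$), that $\SL_2(\Z)$ contains a free group of finite index and rank $\geq 2$ (for instance a torsion-free congruence subgroup), and that any two free groups of rank $\geq 2$ are commensurable; hence $F_n$ is commensurable with $\SL_2(\Z)$ and so lies in $\mathcal{C}$ by part~(1). Part~(4) now gives the conclusion in these cases.

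The surface-group case requires one genuine geometric input, and this is where I expect the only real work. Genus $0$ and $1$ are trivial (the trivial group and $\Z^2$ lie in $\mathcal{C}$), and a non-orientable surface group is either virtually $\Z^2$ or contains an orientable surface group of index~$2$, so I may assume we are looking at $\pi_1(\Sigma_g)$ with $g\geq 2$ orientable. First I would record that all closed orientable surface groups of genus $\geq 2$ are mutually commensurable: a closed orientable surface of genus $d(g-1)+1$ covers $\Sigma_g$ with degree $d$, so for $g,h\geq 2$ the groups $\pi_1(\Sigma_g)$ and $\pi_1(\Sigma_h)$ contain isomorphic subgroups of finite index (of genus $(g-1)(h-1)+1$). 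Hence it is enough to exhibit a single closed surface group inside $\mathcal{C}$. For that I would take a quaternion division algebra $D$ over $\Q$ that splits at the archimedean place, a maximal order $\mathcal{O}_D\subset D$, and the affine $\Z$-group scheme $\mathbf{G}=\SL_1(\mathcal{O}_D)$ of finite type; then $\mathbf{G}(\Z)\in\mathcal{C}$ by part~(2), while $\mathbf{G}(\Z)$ is a cocompact lattice in $\mathbf{G}(\R)\cong\SL_2(\R)$ (cocompact because $D$ is a division algebra). By Selberg's lemma $\mathbf{G}(\Z)$ has a torsion-free subgroup of finite index, which is therefore a closed orientable hyperbolic surface group $\pi_1(\Sigma_{g_0})$ with $g_0\geq 2$; by part~(1) this group lies in $\mathcal{C}$, hence so does every $\pi_1(\Sigma_g)$ with $g\geq 2$, and part~(4) finishes the proof. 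Thus the only non-formal ingredient is the existence of an arithmetic cocompact Fuchsian group, which makes part~(2) applicable, combined with the mutual commensurability of all higher-genus surface groups; the free and $S$-arithmetic cases are purely formal consequences of Theorem~\ref{grothendieck}.
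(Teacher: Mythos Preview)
Your proof is correct and follows essentially the same route as the paper: both argue that free groups and surface groups are commensurable with arithmetic groups, and then invoke parts (1), (2), (4) of Theorem~\ref{grothendieck}. The paper's own proof is a single sentence (``since the free groups and the surface groups are isomorphic to some arithmetic groups''), whereas you have spelled out the concrete realizations---$\SL_2(\Z)$ for free groups, $\SL_1(\mathcal{O}_D)$ for a $\Q$-quaternion division algebra split at infinity for surface groups---and the commensurability reductions; none of this extra detail diverges from the intended argument.
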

 Given a group $\Gamma$, its {\bf Bohr compactifications} is   a pair   ($\B(\Gamma), \beta)$ consisting of a compact (Hausdorff) group $\B(\Gamma)$
and a  homomorphism $\beta : \Gamma \to \B(\Gamma)$
  satisfying the following universal property: for every
compact group  $K$ and every 
homomorphism $\alpha : \Gamma \to K$, there exists a unique continuous homomorphism $\widetilde \alpha: \B(\Gamma)\to K$ such that $\alpha=\widetilde \alpha \circ \beta$. The pair $(\B(\Gamma), \beta)$ is unique in the following sense: if
$(L,\beta^\prime$) is a pair consisting of a compact group $L$ and a  
homomorphism $\beta^\prime : \Gamma \to L$  satisfying the same
universal property, then there exists  an isomorphism $\alpha : \B(\Gamma) \to L$ of topological
groups such that $\beta^\prime \circ \alpha=\beta$.

  The {\bf proalgebraic completion} $\A(\Gamma)$ of a group $\Gamma$, also called the Hochschild\-–Mostow group of  $\Gamma$
is the  proaffine complex algebraic  group $A(\Gamma)$ with a homomorphism
$\alpha: \Gamma\to \A(\Gamma)$ such that for every representation $\rho:\Gamma\to \GL_n(\CC)$   there is a unique algebraic representation $\widetilde  \rho$ such that  $\widetilde  \rho\circ \alpha=\rho$. As in the case of the Bohr compactification the pair $(\A(\Gamma),\alpha)$ is unique in a canonical way.

\begin{cor}\label{bohr}
Let $\phi:\Lambda \hookrightarrow \Gamma$ be a Grothendieck pair. Then 
\begin{enumerate}
\item $\Phi$ induces an isomorphism $\B(\phi):\B(\Lambda)\to \B(\Gamma)$;
\item $\Phi$ induces an isomorphism $\A(\phi): \A(\Lambda)\to \A(\Gamma)$.
\end{enumerate}
\end{cor}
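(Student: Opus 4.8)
The plan is to deduce both parts formally from Theorem~\ref{grothendieck}, using the defining universal properties of $\B(-)$ and $\A(-)$ together with an elementary Yoneda-type observation: a continuous homomorphism $f\colon K_1\to K_2$ of compact Hausdorff groups is an isomorphism as soon as $f^{*}\colon\Hom_{\mathrm{cts}}(K_2,K)\to\Hom_{\mathrm{cts}}(K_1,K)$ is bijective for every compact Hausdorff $K$ (take $K=K_1$ to obtain a section $\psi$ of $f$, then $K=K_2$ and injectivity of $f^{*}$ to force $f\psi=\iid$), and likewise a morphism of proaffine algebraic groups is an isomorphism once it induces bijections on $\Hom(-,H)$ for every affine algebraic group $H$. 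So the whole point is to check that $\B(\phi)$ (resp.\ $\A(\phi)$) induces such bijections, and this is exactly where the Grothendieck pair hypothesis enters.

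For (1): write $\beta_\Lambda,\beta_\Gamma$ for the canonical maps and let $\B(\phi)\colon\B(\Lambda)\to\B(\Gamma)$ be the continuous homomorphism with $\B(\phi)\circ\beta_\Lambda=\beta_\Gamma\circ\phi$. For a compact Hausdorff group $K$, the universal property identifies $\Hom_{\mathrm{cts}}(\B(\Gamma),K)$ with $\Hom(\Gamma,K)$ (via $\psi\mapsto\psi\circ\beta_\Gamma$) and similarly for $\Lambda$, and under these identifications precomposition with $\B(\phi)$ becomes $\phi^{\#}_{K}$. Since $K\in\mathcal C$ by Theorem~\ref{grothendieck}(3), $\phi^{\#}_{K}$ is a bijection; hence $\B(\phi)^{*}$ is bijective for every compact Hausdorff $K$, and the observation above (applied with $K_1=\B(\Lambda)$, $K_2=\B(\Gamma)$, which are themselves compact Hausdorff) gives that $\B(\phi)$ is an isomorphism.

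Part (2) runs along the same lines, with $\GL_n(\CC)$ — more generally $H(\CC)$ for $H$ an affine algebraic group over $\CC$ — in the role of the test group: by Theorem~\ref{grothendieck}(2), applied over the ring $\CC$, every such $H(\CC)$ lies in $\mathcal C$, so $\phi^{\#}_{H(\CC)}$ is a bijection. The universal property of the proalgebraic completion identifies $\Hom(\A(\Gamma),H)$ with $\Hom(\Gamma,H(\CC))$ — after fixing a closed embedding $H\hookrightarrow\GL_n$ and noting that the algebraic extension of a representation $\Gamma\to H(\CC)$ still factors through $H$, since the Zariski closure of its image does — and under this identification the map induced by $\A(\phi)$ is again $\phi^{\#}_{H(\CC)}$. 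Thus $\A(\phi)$ induces bijections on $\Hom(-,H)$ for all affine algebraic $H$. Writing $\A(\Lambda)=\varprojlim_i A_i$ and $\A(\Gamma)=\varprojlim_j B_j$ as inverse limits of affine algebraic groups, for each $i$ the canonical projection $\A(\Lambda)\to A_i$ pulls back uniquely along $\A(\phi)^{*}$ to a morphism $\A(\Gamma)\to A_i$; these assemble into $g\colon\A(\Gamma)\to\A(\Lambda)$ with $g\circ\A(\phi)=\iid$, and then injectivity of $\A(\phi)^{*}$ on each $B_j$, exactly as in the Bohr case, yields $\A(\phi)\circ g=\iid$.

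The Bohr case is essentially formal once Theorem~\ref{grothendieck}(3) is granted, so I expect the only genuine work to lie in (2): making precise the identification $\Hom(\A(\Gamma),H)\cong\Hom(\Gamma,H(\CC))$ for an arbitrary — possibly disconnected, possibly non-reductive — affine algebraic group $H$, and the fact that a morphism of proaffine algebraic groups is detected by its effect on the functors $\Hom(-,H)$ with $H$ an honest algebraic group (a Yoneda lemma in the pro-category). These are standard features of the Hochschild--Mostow construction, but they are exactly what legitimizes reducing (2) to Theorem~\ref{grothendieck}(2); by contrast, in (1) one sidesteps them because $\B(\Lambda)$ and $\B(\Gamma)$ are compact Hausdorff groups and may serve as test objects directly.
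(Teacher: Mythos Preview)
Your proof is correct and follows essentially the same strategy as the paper's: both deduce the result from Theorem~\ref{grothendieck} via the universal properties of $\B(-)$ and $\A(-)$. The paper streamlines your argument slightly by invoking closure of $\mathcal C$ under inverse limits (Theorem~\ref{grothendieck}(1)) to place $\A(\Lambda)$ (resp.\ $\B(\Lambda)$) itself in $\mathcal C$, so that the section $\tau\colon\Gamma\to\A(\Lambda)$ is obtained in one stroke and the conclusion follows from uniqueness of the universal object---this bypasses the explicit Yoneda argument in the pro-category that you carry out.
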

In Section \ref{profinite} we present an example that shows that the isomorphism of profinte completions does not imply the isomorphism of Bohr compactifications. We do not know if this is the situation also with the proalgebraic completions.

\section*{Acknowledgments}

The work of  A. J.-Z.  is partially supported by the grant  PID2020-114032GB-I00 of the Ministry of Science and Innovation of Spain  and by the ICMAT Severo Ochoa project  CEX2019-000904-S4. The work of A. L. was supported by the European Research Council (ERC) under the European Union’s Horizon 2020 (grant agreement No 882751).

   This note grew out from  the workshop on ``Profinite Rigidity" organized by the Institute of Mathematics of Madrid (ICMAT). We would like to thank the institute for this opportunity to work together.  We are   grateful to Nir Avni and Alan Reid for  fruitful discussions.

 \section{Commutative algebra preliminaries}
 Let  $R$  be a finitely generated  commutative ring. Denote by  $\widehat R$ the profinite completion of $R$ and by $\Max R$   the set of its maximal ideals. 

Let  $\mathbf m\in \Max R$.  Observe that, since the field $R/\mathbf m$ is finitely generated as a ring, it is of positive characteristic and thus, by Hilbert's Nullstellensatz \cite[Corollary 5.24]{AMbook}, it is also finite.

 Let $ R_{ {\mathbf m}}$ denote the localization of $R$ at the maximal  ideal $\mathbf m$,  and  let $R_{\widehat{\mathbf m}}=\varprojlim R /\mathbf m^i$ be the $\mathbf m$-adic completion of $R$. By  \cite[Corollary 10.20]{AMbook}, the natural homomorphism $R_ {\mathbf m}\to R_{\widehat{\mathbf m}}$ is injective.  On the other hand, by \cite[Proposition 3.9]{AMbook}, the natural isomorphism $R\to \prod_{\mathbf m\in \Max (R)} R_{ {\mathbf m}}$ is also injective. Thus, we obtain that the map $R\to \prod_{\mathbf m\in \Max (R)} R_{\widehat {\mathbf m}}$ is injective, and so, $R$ is residually finite.

Recall that an $R$-module is {\bf faithfully flat} if taking the tensor product with a sequence produces an exact sequence if and only if the original sequence is exact. 
 \begin{pro}(\cite[Exercise 10.7]{AMbook}) \label{ff}  Let  $R$  be a finitely generated  commutative ring and $\mathbf m\in \Max(R)$.
 Then $R_{\widehat{\mathbf m}}$ is  faithfully flat as a $R_{ {\mathbf m}}$-module.
 \end{pro}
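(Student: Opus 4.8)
The plan is to verify the two standard criteria that together characterize faithful flatness: (i) $R_{\widehat{\mathbf m}}$ is flat over $R_{\mathbf m}$, and (ii) every maximal ideal of $R_{\mathbf m}$ extends to a proper ideal of $R_{\widehat{\mathbf m}}$. Since $R$ is Noetherian (it is a finitely generated commutative ring, hence Noetherian by Hilbert's basis theorem), both $R_{\mathbf m}$ and its $\mathbf m$-adic completion are Noetherian local rings, and the completion of a Noetherian local ring with respect to its maximal ideal is flat over it; this is a standard consequence of the Artin–Rees lemma. For (ii), note that $R_{\mathbf m}$ is local with maximal ideal $\mathbf m R_{\mathbf m}$, and $R_{\widehat{\mathbf m}}$ is local with maximal ideal $\widehat{\mathbf m}$; the extension of $\mathbf m R_{\mathbf m}$ generates $\widehat{\mathbf m}$, which is proper. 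By the criterion ``flat $+$ surjective on $\operatorname{Spec}$ over the maximal ideals,'' or equivalently ``flat local ring map is faithfully flat,'' this gives faithful flatness.

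First I would record that $R_{\mathbf m} \to R_{\widehat{\mathbf m}}$ is a local homomorphism of Noetherian local rings: the only maximal ideal of $R_{\mathbf m}$ is $\mathbf m R_{\mathbf m}$, its image lies in $\widehat{\mathbf m}$, and $R_{\widehat{\mathbf m}}/\widehat{\mathbf m} \cong R/\mathbf m$ so $\widehat{\mathbf m}$ is maximal. Second I would invoke flatness of the $\mathbf m$-adic completion over a Noetherian local ring. Third I would apply the general fact that a flat local homomorphism of local rings is automatically faithfully flat — the point being that for a flat module $M$ over a local ring $(S,\mathfrak n)$, faithfulness is equivalent to $M/\mathfrak n M \neq 0$, and here $R_{\widehat{\mathbf m}}/(\mathbf m R_{\mathbf m})R_{\widehat{\mathbf m}} = R_{\widehat{\mathbf m}}/\widehat{\mathbf m} \cong R/\mathbf m \neq 0$.

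Alternatively, one can argue entirely at the graded level: flatness plus the isomorphism of associated graded rings $\operatorname{gr}_{\mathbf m}(R_{\mathbf m}) \cong \operatorname{gr}_{\widehat{\mathbf m}}(R_{\widehat{\mathbf m}})$, combined with the fact that $R_{\widehat{\mathbf m}}$ is $\widehat{\mathbf m}$-adically complete and $\widehat{\mathbf m} = \mathbf m R_{\widehat{\mathbf m}}$ is in the radical, lets one lift exactness; but the local-criterion route above is cleaner.

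The main obstacle is essentially bookkeeping rather than a genuine difficulty: one must be careful that $R$ being a finitely generated \emph{ring} (not a finitely generated algebra over a field) still yields a Noetherian ring — which it does, being a quotient of $\Z[x_1,\dots,x_n]$ — so that all the Noetherian-local machinery (Artin–Rees, flatness of completion) applies to $R_{\mathbf m}$. Once that is in place, the result is immediate from the cited criteria; indeed the statement is literally Exercise 10.7 of \cite{AMbook}, so no further ideas are needed.
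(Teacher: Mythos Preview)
Your argument is correct and is exactly the standard route: $R$ is Noetherian as a quotient of $\Z[x_1,\dots,x_n]$, so $R_{\mathbf m}$ is a Noetherian local ring, its $\mathbf m$-adic completion is flat over it by Artin--Rees, and a flat local homomorphism of local rings is faithfully flat because $R_{\widehat{\mathbf m}}/\mathbf m R_{\widehat{\mathbf m}}\cong R/\mathbf m\neq 0$. The paper does not give its own proof of this proposition at all---it simply records the statement with a reference to \cite[Exercise 10.7]{AMbook}---so there is nothing to compare; your write-up supplies precisely the details that the cited exercise asks the reader to verify.
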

 The following corollary is a standard consequence of faithfully flatness (see, for example, \cite[Theorem III.6.6]{Arnotes}).
 
 \begin{cor}\label{tensor}
  Let  $R$  be a finitely generated  commutative ring and $\mathbf m\in \Max(R)$. Then
  $$\{a\in R_{\widehat{\mathbf m}}\colon a\otimes 1=1\otimes a \in R_{\widehat{\mathbf m}}\otimes_{R_{ {\mathbf m}}}R_{\widehat{\mathbf m}}\}=R_{ {\mathbf m}}.$$
 \end{cor}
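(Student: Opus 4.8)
The claim is the standard "descent of scalars along a faithfully flat extension" statement, specialized to the self-tensor $B\otimes_A B$ with $A=R_{\mathbf m}$ and $B=R_{\widehat{\mathbf m}}$; by \Cref{ff} we know $B$ is faithfully flat over $A$, so the plan is simply to run the general argument in this concrete case. First I would set up the relevant maps: let $d_0,d_1\colon B\to B\otimes_A B$ be $d_0(b)=b\otimes 1$ and $d_1(b)=1\otimes b$, and recall that the multiplication map $m\colon B\otimes_A B\to B$ and the unit map $A\to B$ fit into the augmented complex
\[
0\to A\to B\xrightarrow{\ d_0-d_1\ }B\otimes_A B .
\]
The corollary asserts exactness of this sequence at $B$ (injectivity of $A\to B$ has already been observed in the excerpt via \cite[Corollary 10.20]{AMbook}, and faithful flatness even gives injectivity directly). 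So the whole content is: the equalizer of $d_0$ and $d_1$ equals the image of $A$.

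The key trick is the usual one: it suffices to prove exactness after applying the faithfully flat functor $B\otimes_A(-)$, since a complex of $A$-modules is exact iff it becomes exact after tensoring with a faithfully flat $A$-algebra. After tensoring with $B$ on the left, the sequence $0\to A\to B\to B\otimes_A B$ becomes $0\to B\to B\otimes_A B\to B\otimes_A B\otimes_A B$, and now this sequence \emph{splits}: one has the contracting homotopy given by $s_0\colon B\otimes_A B\to B$, $s_0(b\otimes b')=bb'$, and $s_1\colon B\otimes_A B\otimes_A B\to B\otimes_A B$, $s_1(b\otimes b'\otimes b'')=bb'\otimes b''$, satisfying the simplicial identities that force exactness. (Concretely: if $x\in B\otimes_A B$ satisfies $1\otimes x=$ its image under $d_0^{(2)}$, i.e. it is a cocycle in the tensored-up complex, then $x=d_0(s_0(x))$, exhibiting it in the image; that is all that is needed.) Hence the tensored sequence is exact, and by faithful flatness the original sequence $0\to A\to B\xrightarrow{d_0-d_1}B\otimes_A B$ is exact at $B$, which is exactly the assertion of the corollary.

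The only step with any subtlety is making sure the descent-of-exactness criterion is applied in the right direction — we are tensoring the sequence of $R_{\mathbf m}$-modules with $R_{\widehat{\mathbf m}}$ over $R_{\mathbf m}$, using \Cref{ff}, and checking that the resulting sequence is the standard (split) Amitsur-type complex for the extension $R_{\mathbf m}\to R_{\widehat{\mathbf m}}$ base-changed along itself; everything else is the formal splitting homotopy. I expect no genuine obstacle here: this is precisely the computation carried out in \cite[Theorem III.6.6]{Arnotes}, which the excerpt already cites, so in the write-up I would simply invoke that reference and indicate the identification $a\otimes 1=1\otimes a \iff a\in R_{\mathbf m}$ as its immediate consequence.
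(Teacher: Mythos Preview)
Your proposal is correct and is precisely the standard faithfully-flat descent argument the paper defers to: the paper does not spell out a proof but simply cites \cite[Theorem III.6.6]{Arnotes}, and what you have written is exactly that computation (exactness of the Amitsur complex via the contracting homotopy after base change, then descent by faithful flatness from \Cref{ff}). There is nothing to add or correct.
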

  \section{Grothendieck's theorem}\label{GLRsection}
  For convenience of the reader we reprove in this section  Grothendieck theorem following the main steps of Grothendieck's argument. The proof here might be slightly easier to read than the one in \cite{Gr70}.
   \begin{pro}\label{gr1}
Let $G\in \mathcal C$. If $H$ is commensurable with $G$, then $H\in \mathcal C$.
\end{pro}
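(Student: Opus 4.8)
The plan is to prove that membership in $\mathcal{C}$ is a commensurability invariant, working with the definition directly: given a Grothendieck pair $\phi \colon \Lambda \hookrightarrow \Gamma$, we must show $\phi^{\#}_H \colon \Hom(\Gamma, H) \to \Hom(\Lambda, H)$ is a bijection under the hypothesis that $\phi^{\#}_G$ is a bijection for a group $G$ commensurable with $H$. First I would reduce to the two defining moves of commensurability: passing to a finite-index subgroup, and passing to a finite-index overgroup (equivalently, a quotient by a finite normal subgroup, since one can also go up and down through a common subgroup). It suffices to treat each move separately, so the argument splits into showing: (a) if $G \in \mathcal{C}$ and $G_0 \leqslant G$ has finite index, then $G_0 \in \mathcal{C}$; and (b) if $G_0 \in \mathcal{C}$ and $G_0 \leqslant G$ has finite index, then $G \in \mathcal{C}$.

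For (a), the key observation is that a Grothendieck pair pulls back finite-index data: if $\tau \colon \Lambda \to G_0$ is given, compose with the inclusion to get $\Lambda \to G$, extend uniquely to $\widetilde{\tau} \colon \Gamma \to G$ using $G \in \mathcal{C}$, and then argue that the image of $\widetilde{\tau}$ already lands in $G_0$. This last point uses that $\widehat{\phi}$ is an isomorphism: the subgroup $\widetilde{\tau}^{-1}(G_0)$ has finite index in $\Gamma$, hence corresponds to an open subgroup of $\widehat{\Gamma} \cong \widehat{\Lambda}$, and since $\Lambda = \phi^{-1}$ of its own image already maps into $G_0$, the finite-index subgroups forced to contain $\phi(\Lambda)$ must be all of $\Gamma$ — more precisely, $\widetilde{\tau}^{-1}(G_0) \supseteq \phi(\Lambda)$ and its index equals the index of the closure, which is $1$ because $\widehat{\phi}$ is onto. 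Injectivity of $\phi^{\#}_{G_0}$ is inherited from injectivity of $\phi^{\#}_G$ since $G_0 \hookrightarrow G$.

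For (b), I would pass to a normal core: replacing $G_0$ by $\bigcap_{g \in G} g G_0 g^{-1}$, which is still finite index, still in $\mathcal{C}$ by part (a), and now normal in $G$ with finite quotient $Q = G/G_0$. Given $\tau \colon \Lambda \to G$, the composition $\Lambda \to Q$ factors through the finite quotient, and finite groups lie in $\mathcal{C}$ (they are compact, or one argues directly that $\widehat{\phi}$ being an isomorphism forces $\Hom(\Gamma, Q) \to \Hom(\Lambda, Q)$ to be bijective). So the homomorphism $\Lambda \to Q$ extends uniquely to $\Gamma \to Q$; pulling back gives a finite-index subgroup $\Gamma_0 \leqslant \Gamma$ with $\phi(\Lambda) \cap \Gamma_0$ of the right index, and one checks $\phi$ restricts to a Grothendieck pair $\Lambda_0 \hookrightarrow \Gamma_0$ where $\Lambda_0 = \phi^{-1}(\Gamma_0)$ maps into $G_0$. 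Then $G_0 \in \mathcal{C}$ extends $\Lambda_0 \to G_0$ to $\Gamma_0 \to G_0$, and the two pieces (the map to $Q$ on all of $\Gamma$, the map to $G_0$ on $\Gamma_0$) must be glued into a homomorphism $\Gamma \to G$ extending $\tau$; uniqueness at each stage gives uniqueness of the glued map, hence bijectivity of $\phi^{\#}_G$.

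The main obstacle I anticipate is the gluing step in (b): having a compatible pair of partial extensions does not formally produce a homomorphism $\Gamma \to G$ without care, because $G$ is an extension of $Q$ by $G_0$ that need not split. The clean way around this is to avoid gluing altogether and instead work with the subgroup $\Gamma \times_Q (\text{something})$, or — following Grothendieck — to phrase (b) via the fact that $\Hom(\Gamma, G)$ sits inside $\Hom(\Gamma_0, G_0) \times \Hom(\Gamma, Q)$ as the subset of compatible pairs, and the analogous statement holds for $\Lambda$; then bijectivity of $\phi^{\#}_G$ follows from bijectivity of $\phi^{\#}_{G_0}$ (on the restricted pairs) together with bijectivity for the finite group $Q$, once one verifies that $\phi$ restricts to a Grothendieck pair on the relevant finite-index subgroups. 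That verification — that a finite-index subgroup of $\Gamma$ containing $\phi(\Lambda)$-image and meeting it in finite index again yields an isomorphism of profinite completions — is routine from the correspondence between open subgroups of $\widehat{\Lambda}$ and $\widehat{\Gamma}$ under $\widehat{\phi}$, but it is the technical heart and I would write it out carefully.
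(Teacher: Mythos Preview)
Your outline mirrors the paper's proof: both split into (a) finite-index subgroup and (b) finite-index overgroup, and your argument for (a) is exactly the paper's. In (b) the paper likewise passes to the normal core and restricts to a Grothendieck pair on finite-index subgroups (it uses $\Gamma_1 =$ intersection of all subgroups of index at most $|H:G|$ rather than your $\tau$-dependent $\Gamma_0$, but either choice works).

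The gap is precisely where you locate it, but your proposed fix does not work. The map
\[
\Hom(\Gamma, G) \longrightarrow \Hom(\Gamma_0, G_0) \times \Hom(\Gamma, Q)
\]
is injective, but it is \emph{not} surjective onto the set of compatible pairs: a compatible pair lifts to a homomorphism $\Gamma \to G$ only when an extension-class obstruction vanishes, and for a non-split extension $1 \to G_0 \to G \to Q \to 1$ it need not (already $\Gamma = \Z/2 \times \Z/2$, $G = \Z/4$ gives a counterexample). So bijectivity for $G_0$ and for $Q$ does not formally yield bijectivity for $G$.

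The paper's resolution is direct and is the idea you are missing. Since $\widehat{\phi}$ is onto, $\phi(\Lambda)$ meets every coset of the finite-index normal subgroup $\Gamma_0$, so $\Gamma = \Gamma_0 \cdot \phi(\Lambda)$. One then \emph{defines} $\widetilde{\tau}\bigl(a\,\phi(b)\bigr) = \sigma(a)\,\tau(b)$ for $a \in \Gamma_0$, $b \in \Lambda$; this is well-defined because $\sigma$ and $\tau$ agree on $\phi(\Lambda_0) = \Gamma_0 \cap \phi(\Lambda)$. The nontrivial step is checking that $\widetilde{\tau}$ is a homomorphism, and here one needs a \emph{second} application of $G_0 \in \mathcal{C}$: for each fixed $b \in \Lambda$, the two homomorphisms $\Gamma_0 \to G_0$ given by $a \mapsto \tau(b)\,\sigma(a)\,\tau(b)^{-1}$ and $a \mapsto \sigma\bigl(\phi(b)\,a\,\phi(b)^{-1}\bigr)$ agree on $\phi(\Lambda_0)$, hence (by uniqueness, since $\Lambda_0 \hookrightarrow \Gamma_0$ is a Grothendieck pair) on all of $\Gamma_0$. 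This equivariance is exactly what makes the formula multiplicative.
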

\begin{proof}
Let  $\phi:\Lambda \hookrightarrow \Gamma$ be a Grothendieck pair. We identify the elements of $\Lambda$ with their images under the map $\phi$. Thus, we can view $\Lambda$ as a subgroup of $\Gamma$.
 We want to show that the induced map 
$$ \phi^{\#}_H:\Hom (\Gamma, H)\to \Hom (\Lambda, H), \ \tau\mapsto \tau\circ \phi,$$  is bijective. It is enough to consider two cases: $H$ is of finite index in $G$ or $G$ is of finite index in $H$.

In the first case, since $\phi_G^{\#}$ is injective,  $\phi_H^{\#}$ is injective as well. Let $\psi\in \Hom (\Lambda, H)$. Since $\phi_G^{\#}$ is surjective, there exists $\tau: \Gamma\to G$ such that $\psi=\tau\circ \phi$. Since $H$ is of finite index in $G$, $\im \tau \cap H$ is of finite index in $\im \tau$. Since $\widehat \phi$ is onto, $\im \psi$ is profinitely dense in $\im \tau$. Thus, since $\im \phi\le \im \tau \cap H$,  we obtain that
$\im \tau\le H$. Therefore, $\phi_H^{\#}$ is surjective.

Assume now that $G$ is a subgroup of $H$ of finite index. Since we have already proved the previous case, we can substitute $G$ by its core in $H$ and  assume that $G$ is also normal in $H$.

Let $\Lambda_1\le \Lambda $ ($\Gamma_1\le \Gamma$) be the intersection of all subgroups of $\Lambda$ ($\Gamma$) of index at most $|H:G|$. Then, it is clear that the restriction of $\phi$ on $\Lambda_1$, $\phi_1:\Lambda_1 \hookrightarrow \Gamma_1$ is also a Grothendieck pair. Thus $(\phi_1)_G^{\#}$ is a bijection. 

Let $\psi\in \Hom(\Lambda, H)$. Then, $\psi^{-1}(G)$ is of index at most $|H:G|$ in $\Lambda$, and so,   by our construction of $\Lambda_1$, $\Lambda_1\le \psi^{-1}(G)$. Hence $\psi(\Lambda_1)\le G$.  Consider the restriction of $\psi$ on $\Lambda_1$, $\psi_1:\Lambda_1 \to  G$.  Since $G\in \mathcal C$, there exists a unique homomorphism $\tau_1:\Gamma_1\to G$ such that $\psi_1=\tau_1\circ \phi_1$.  

If there are two homomorphisms $\tau, \tau^\prime: \Gamma\to G$ that extend $\psi$, then their restrictions on $\Gamma_1$ coincide with $\tau_1$. Since $\phi:\Lambda \hookrightarrow \Gamma$ is a Grothendieck pair, $\Gamma=\Gamma_1\Lambda$.  Thus,  $\tau=\tau^\prime$. This shows that $\phi_H$ is injective.

In order to show that $\phi_H$ is surjective, we use again that $\Gamma=\Gamma_1\Lambda$. For every  $a\in \Gamma_1$ and  every $b\in \Lambda$ we define 
$$\tau(ab)=\tau_1(a)\psi(b).$$ Since $\Gamma_1\cap \Lambda=\Lambda_1$ and $(\tau_1)_{|\Lambda_1}=\psi_1= \psi_{|\Lambda_1}$, $\tau$ is a well-defined. 
Let us show that $\tau$ is a  homomorphism.  Let  $b\in \Lambda$. Define two maps $\Gamma_1\to G$ by 
$$\alpha_1: a\mapsto \psi(b)\tau_1(a) \psi(b)^{-1}\textrm{\ and \ } \alpha_2: a\mapsto  \tau_1(bab^{-1}). $$
Their restrictions on $\Lambda_1$ coincide. Hence, since $G\in \mathcal C$, $\alpha_1=\alpha_2$. Therefore, for every  $a_1,a_2\in \Gamma_1$ and  every $b_1,b_2\in \Lambda$, we obtain that
\begin{multline*}
\tau(a_1b_1a_2b_2)=\tau(a_1b_1a_2b_1^{-1}b_1b_2)=\tau_1(a_1b_1a_2b_1^{-1})\psi(b_1b_2)=\\
\tau_1(a_1)\tau_1(b_1a_2b_1^{-1})\psi(b_1b_2)=\tau_1(a_1)\psi(b_1)\tau_1(a_2)\psi(b_1)^{-1}\psi(b_1b_2)=\\ \tau_1(a_1)\psi(b_1)\tau_1(a_2)\psi(b_2)=\tau(a_1b_1)\tau(a_2b_2).
\end{multline*}
Hence $\tau$ is a homomorphism which extends $\psi$,  and so $\phi_H$ is surjective.
\end{proof}
\begin{pro}\label{gr2}
The class $\mathcal C$ is closed under inverse limits and direct products and contains all nilpotent groups.
\end{pro}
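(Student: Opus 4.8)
The plan is to handle the three assertions separately; closure under direct products and inverse limits is purely formal, and the nilpotent case is where the content lies.

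\emph{Products and inverse limits.} For any group $\Sigma$ the functor $\Hom(\Sigma,-)$ sends products to products and inverse limits to inverse limits, naturally in $\Sigma$. Hence for a Grothendieck pair $\phi:\Lambda\hookrightarrow\Gamma$ and a family (resp. a directed system) of groups $G_i\in\mathcal C$, the map $\phi^{\#}_{\prod G_i}$ (resp. $\phi^{\#}_{\varprojlim G_i}$) is identified with $\prod_i\phi^{\#}_{G_i}$ (resp. $\varprojlim_i\phi^{\#}_{G_i}$). A product of bijections is a bijection; and for an inverse limit of bijections the inverse maps again form a compatible family, so their inverse limit is a two-sided inverse. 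Therefore $\prod_i G_i$ and $\varprojlim_i G_i$ lie in $\mathcal C$. (Since the trivial group is obviously in $\mathcal C$, finite products could instead be deduced from the inverse-limit case, but the direct argument covers arbitrary products at once.)

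\emph{Nilpotent groups.} Let $N$ be nilpotent, say of class $\le c$, and $\phi:\Lambda\hookrightarrow\Gamma$ a Grothendieck pair. Because $\gamma_{c+1}(N)=1$ and $\gamma_{c+1}$ is functorial, every homomorphism from $\Gamma$ (resp. from $\Lambda$) to $N$ factors uniquely through the maximal class-$\le c$ nilpotent quotient $\Gamma_{[c]}:=\Gamma/\gamma_{c+1}(\Gamma)$ (resp. $\Lambda_{[c]}$). So $\phi^{\#}_N$ is identified with the map $\Hom(\Gamma_{[c]},N)\to\Hom(\Lambda_{[c]},N)$ induced by $\phi_{[c]}:\Lambda_{[c]}\to\Gamma_{[c]}$, and it is enough to show that $\phi_{[c]}$ is an isomorphism for every $c$; since every nilpotent group has finite class, this covers them all.

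\emph{The key step.} The groups $\Lambda_{[c]}$ and $\Gamma_{[c]}$ are finitely generated nilpotent, hence polycyclic and residually finite, and $\widehat{\phi_{[c]}}:\widehat{\Lambda_{[c]}}\to\widehat{\Gamma_{[c]}}$ is an isomorphism -- it is simply $\widehat\phi$ read off on the open normal subgroups whose quotient is nilpotent of class $\le c$. So the crux is the classical fact that a homomorphism of finitely generated nilpotent groups which induces an isomorphism of profinite completions is itself an isomorphism. I would argue it as follows: the image of $\phi_{[c]}$ is dense in the profinite topology of $\Gamma_{[c]}$ and, by subgroup separability of finitely generated nilpotent groups, also closed, hence equal to $\Gamma_{[c]}$; and, by exactness of profinite completion on polycyclic groups, the kernel of $\phi_{[c]}$ has trivial profinite completion, hence is trivial since a nontrivial finitely generated nilpotent group has a nontrivial finite quotient. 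This finishes the nilpotent case.

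The only non-formal ingredient is this last step, and there everything is entirely standard (Mal'cev, Hirsch): residual finiteness and subgroup separability of finitely generated nilpotent groups, together with exactness of profinite completion on polycyclic groups. An alternative in the spirit of building $N$ up along its lower central series would be to prove directly that $\mathcal C$ is closed under central extensions by members of $\mathcal C$: injectivity of $\phi^{\#}_N$ is then easy (two lifts differ by a homomorphism into the central kernel, trivial on $\Lambda$), but surjectivity appears to require injectivity of the restriction $H^2(\Gamma;C)\to H^2(\Lambda;C)$ for $C\in\mathcal C$, which is much less transparent, so I would favour the quotient argument above.
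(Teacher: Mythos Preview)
Your proof is correct and follows the same strategy as the paper's. The treatment of products and inverse limits is identical; for nilpotent groups the paper, like you, reduces to showing that $\phi_{[c]}:\Lambda/\gamma_{c+1}(\Lambda)\to\Gamma/\gamma_{c+1}(\Gamma)$ is an isomorphism, but it simply cites \cite[Proposition 2]{PT90} (finitely generated nilpotent groups are {\textbf R}GR) for this, whereas you supply a self-contained argument via LERF and residual finiteness of finitely generated nilpotent groups---which is essentially the content of that citation. (Your injectivity step via exactness of profinite completion on polycyclic groups is fine, though note it can be shortened: since $\Lambda_{[c]}$ is residually finite and $\widehat{\phi_{[c]}}$ is injective, the composite $\Lambda_{[c]}\hookrightarrow\widehat{\Lambda_{[c]}}\xrightarrow{\widehat{\phi_{[c]}}}\widehat{\Gamma_{[c]}}$ is injective, and it factors through $\phi_{[c]}$.)
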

\begin{proof} Let  $\phi:\Lambda \hookrightarrow \Gamma$ be a Grothendieck pair.

If $G$ is an inverse limit of $G_i$, then $\Hom (\Gamma, G)$ is a direct limit of $\Hom (\Gamma, G_i)$. Then since $\phi_{G_i}^{\#}$ are bijections, $\phi_{G}^{\#}$ is bijective.

If $G=G_1\times G_2$, then $\Hom (\Gamma, G)=\Hom (\Gamma, G_1)\times \Hom (\Gamma, G_2)$ and $\phi_G^{\#}=(\phi_{G_1}^{\#}, \phi_{G_2}^{\#})$. Thus, if $G_1$ and $G_2$ are in $\mathcal C$, then $G\in \mathcal C$ as well.

Finally, by \cite[Proposition 2]{PT90}, finitely generated nilpotent groups are {\textbf R}GR. Therefore,  $\phi$ induces an isomorphism $\phi_{n}:\Lambda/\gamma_n(\Lambda)\to \Gamma/\gamma_n(\Gamma)$, where $\gamma_n( \Lambda)$ denotes the $n$th term of the lower central series of $\Lambda$, and so for every nilpotent group $G$, $\phi_G^{\#}$ is a bijection.
\end{proof}
 Now we prove the second part of Theorem \ref{grothendieck}, which is the main part of the theorem.
\begin{pro}\label{gr3}
Let $\phi:\Lambda \hookrightarrow \Gamma$ be a Grothendieck pair, $A$  a commutative ring,  $G$ an affine group scheme of finite type over $A$ and $\psi: \Lambda\to G(A)$ a homomorphism. Then there exists a unique $\tau:\Gamma \to  G(A) $ such that $\psi=\tau\circ\phi$.
\end{pro}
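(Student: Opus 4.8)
The plan is to reduce the problem about an arbitrary affine group scheme $G$ of finite type over $A$ to the case $G = \GL_n$ over a finitely generated subring of $A$, and then to exploit the commutative-algebra facts from Section 2. First I would observe that a homomorphism $\psi\colon\Lambda\to G(A)$ is determined by finitely many structure constants: since $\Lambda$ is finitely generated and $G$ is of finite type, the entries of $\psi$ on a finite generating set of $\Lambda$, together with the coefficients defining $G$, lie in a finitely generated subring $R\subseteq A$; so $\psi$ factors through $G_R(R)$ where $G_R$ is an affine $R$-scheme of finite type. Because $G_R$ is affine of finite type, a closed embedding $G_R\hookrightarrow \GL_{n,R}$ exists, and $G_R(R)$ is the subgroup of $\GL_n(R)$ cut out by the (finitely many) defining equations of $G_R$. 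If we can extend the composite $\Lambda\to \GL_n(R)$ to a homomorphism $\tau\colon\Gamma\to \GL_n(R)$, then the defining equations, being polynomial identities with coefficients in $R$, hold on a profinitely dense subgroup $\im\psi$ of $\im\tau$ (here I use that $\widehat\phi$ is onto, so $\phi(\Lambda)$ is dense in $\Gamma$ for the profinite topology, hence its image is dense in $\im\tau\le\GL_n(R)$, and $R$ is residually finite by Section 2); since the equations are closed conditions they hold on all of $\im\tau$, so in fact $\tau(\Gamma)\subseteq G_R(R)\subseteq G(A)$. Thus it suffices to treat $G=\GL_n$ over a finitely generated commutative ring $R$.

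For $G=\GL_n$ over $R$, the next step is to localize and complete. By the discussion in Section 2, $R$ embeds into $\prod_{\mathbf m\in\Max R} R_{\widehat{\mathbf m}}$, so it is enough to extend $\psi$ after composing with each $R\to R_{\widehat{\mathbf m}}$, provided the extensions are compatible; uniqueness (which I prove below) will force compatibility automatically, so I may fix a single maximal ideal $\mathbf m$ and work over the complete local ring $\widehat R:=R_{\widehat{\mathbf m}}$, whose residue field $R/\mathbf m$ is finite. Over such a ring $\GL_n(\widehat R)=\varprojlim_i \GL_n(R/\mathbf m^i)$, a profinite group, and it contains the open normal congruence subgroups. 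The point is now that $\GL_n(R/\mathbf m)$ is finite and each kernel $K_i=\ker(\GL_n(\widehat R)\to\GL_n(R/\mathbf m^i))$ is finitely generated and nilpotent-by-(finite $\mathbf m$-torsion)? — more precisely, $K_1/K_i$ is a finite $p$-group (where $p=\operatorname{char}(R/\mathbf m)$), so $\GL_n(\widehat R)$ is profinite and hence lies in $\mathcal C$ by part (3) of Theorem \ref{grothendieck}, already proved (Proposition \ref{gr2} gives inverse limits, and a finite group is trivially in $\mathcal C$). Therefore $\phi^{\#}_{\GL_n(\widehat R)}$ is a bijection, and the composite $\Lambda\to\GL_n(R)\to\GL_n(\widehat R)$ extends uniquely to $\tau_{\widehat{\mathbf m}}\colon\Gamma\to\GL_n(\widehat R)$.

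It remains to descend: to show that, for each $\mathbf m$, the map $\tau_{\widehat{\mathbf m}}$ actually lands in $\GL_n(R_{\mathbf m})$, and then that these glue to a map into $\GL_n(R)$. For the descent to $R_{\mathbf m}$ I would apply Corollary \ref{tensor}: given $\gamma\in\Gamma$, consider the two homomorphisms $\Gamma\to\GL_n(\widehat R\otimes_{R_{\mathbf m}}\widehat R)$ obtained from $\tau_{\widehat{\mathbf m}}$ via the two coface maps $\widehat R\rightrightarrows\widehat R\otimes_{R_{\mathbf m}}\widehat R$; they agree on $\Lambda$ (since $\psi$ already takes values in $\GL_n(R_{\mathbf m})\subseteq\GL_n(R)$, where $R\hookrightarrow R_{\mathbf m}$), and $\widehat R\otimes_{R_{\mathbf m}}\widehat R$ maps to a product of complete local rings with finite residue fields — in particular it is residually finite, hence $\GL_n$ of it lies in $\mathcal C$ — so the two maps agree on all of $\Gamma$ by uniqueness; by Corollary \ref{tensor} the entries of $\tau_{\widehat{\mathbf m}}(\gamma)$ and its inverse lie in $R_{\mathbf m}$, i.e.\ $\tau_{\widehat{\mathbf m}}$ factors through $\GL_n(R_{\mathbf m})$. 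Finally, uniqueness of the extension into each $\GL_n(R_{\widehat{\mathbf m}})$ shows the various $\tau_{\widehat{\mathbf m}}$ are the images of a single map $\tau\colon\Gamma\to\prod_{\mathbf m}\GL_n(R_{\mathbf m})$, whose coordinates agree on the dense subgroup $\phi(\Lambda)$; since $R=\bigcap_{\mathbf m}R_{\mathbf m}$ inside $\prod R_{\widehat{\mathbf m}}$, the image of $\tau$ lies in $\GL_n(R)$, giving the desired extension, and uniqueness is inherited from the uniqueness over any single $R_{\widehat{\mathbf m}}$. The main obstacle I expect is the verification that $\GL_n$ of a complete local ring with finite residue field (and, more delicately, of the tensor product $\widehat R\otimes_{R_{\mathbf m}}\widehat R$) genuinely belongs to $\mathcal C$ — i.e.\ really is a profinite or at least residually-finite group of the right shape — since this is where the finiteness of residue fields and the structure of $\mathbf m$-adic completions (Section 2) have to be combined carefully; everything else is bookkeeping and repeated appeals to uniqueness.
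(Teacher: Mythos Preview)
Your proposal follows the same core strategy as the paper: reduce to a finitely generated base ring, construct the extension over the $\mathbf m$-adic completion $R_{\widehat{\mathbf m}}$ (where the target group is profinite, hence an inverse limit of finite groups and so in $\mathcal C$ by \cref{gr2}), and then use faithfully flat descent (\cref{tensor}) to push the extension down to $R_{\mathbf m}$. The obstacle you flag --- whether $\GL_n$ of $R_{\widehat{\mathbf m}}\otimes_{R_{\mathbf m}}R_{\widehat{\mathbf m}}$ lies in $\mathcal C$ --- is not the real difficulty: you only need \emph{uniqueness} of extensions into this group, and uniqueness follows for any target ring $S$ by replacing $S$ with the finitely generated subring generated by the matrix entries of the two candidate maps on a finite generating set of $\Gamma$; that subring is residually finite, and then profinite density of $\phi(\Lambda)$ in $\Gamma$ does the rest. (The paper's ``uniqueness of extensions'' is invoked in exactly the same way.)

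The genuine gap is in your final gluing step. You assert that $R=\bigcap_{\mathbf m} R_{\mathbf m}$ inside $\prod_{\mathbf m} R_{\widehat{\mathbf m}}\cong\widehat R$, but this is false already for $R=\Z$: the element of $\widehat\Z=\prod_p\Z_p$ that is $1$ in the $\Z_2$-coordinate and $0$ elsewhere has every $p$-component in $\Z_{(p)}$ yet does not lie in $\Z$. So knowing $\tau_{\widehat{\mathbf m}}(\Gamma)\subseteq\GL_n(R_{\mathbf m})$ for every $\mathbf m$ does not force the global map $\Gamma\to\GL_n(\widehat R)$ to land in $\GL_n(R)$. The paper avoids this by a different endgame: it first takes the canonical extension $\tau\colon\Gamma\to G(\widehat A)$ (namely $\psi_{\widehat A}\circ\widehat\phi^{-1}|_\Gamma$) and lets $B\subseteq\widehat A$ be the \emph{finitely generated} $A$-subalgebra generated by the coordinates of $\tau(\Gamma)$. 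The descent argument at a single $\mathbf m$ then shows $B_{\mathbf m}=A_{\mathbf m}$; since this holds for every $\mathbf m$, one gets $(B/A)_{\mathbf m}=0$ for all $\mathbf m$ and hence $B=A$ by the local--global principle for modules. The passage through the finitely generated intermediate ring $B$ is precisely what converts your collection of local statements into a global one.
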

\begin{proof} Since $\Lambda$ and $\Gamma$ are finitely generated, without loss of generality we may assume that $A$ is a finitely generated ring. In particualr, $A$ is residually finite.

Consider the following commutative diagram.
\begin{equation}\label{diagram1}\begin{array}{ccccc}
G (A)&\xleftarrow{\psi} & \Lambda & \xrightarrow{\phi} &\Gamma \\
\hookdownarrow &&\hookdownarrow &&\hookdownarrow \\
G (\widehat A)&\xleftarrow{\psi_{\widehat{A}}} & \widehat{ \Lambda }& \xrightarrow{\widehat{\phi}} &\widehat{\Gamma}
 \end{array}\end{equation}
First observe, that since $\widehat{\phi}$ is an isomorphism,  if $\tau$ exists, it is unique. We call this property  {\it the uniqueness of extensions}.

Let $B$ be the $A$-subring of $\widehat A$ generated by the  coordinates  of the elements  
from ${\psi_{\widehat{A}}}(\widehat{\phi}^{-1}(\Gamma))$. Denote by $\tau:\Gamma\to G (B)$ the  restriction of $ \psi_{\widehat{A}}\circ \widehat{\phi}^{-1}$ on $\Gamma$.
We want to show that the embedding $\alpha:A\hookrightarrow B$ is an isomorphism. Assume that it is not the case. 
Then, by \cite[Proposition 3.9]{AMbook}, there exists $\mathbf m\in \Max(A)$ such that the natural ring homomorphism  $\alpha_{\mathbf m}:A_{\mathbf m}\to B_{\mathbf m}$, induced by   $\alpha$, is not surjective. Observe that $\alpha_{\mathbf m}$ is injective by \cite[Proposition 3.9]{AMbook}. The homomorphism $\alpha_{\mathbf m}$ induces an embedding
$$\alpha_{\mathbf m, G}:G(A_{\mathbf m})\to G(B_{ \mathbf m}).$$

Consider the following commutative diagram.
 \begin{equation}\label{diagram2}\begin{array}{ccccc}
G (A_{\mathbf m})&\xleftarrow{\psi_{A_{\mathbf m}}} & \Lambda & \xrightarrow{\phi} &\Gamma \\
\hookdownarrow &&\hookdownarrow &&\hookdownarrow \\
G (A_{\widehat{\mathbf m}})&\xleftarrow{{\psi_{A_{\widehat{\mathbf m}}}}} & \widehat{ \Lambda }& \xrightarrow{\widehat{\phi}} &\widehat{\Gamma}
 \end{array}\end{equation}
 and let $\pi:\Gamma\to G (A_{\widehat{\mathbf m}}) $ be the restriction on  $\Gamma$ of the map $ \psi_{A_{\widehat{\mathbf m}}}\circ\widehat{\phi}^{-1}$.
 
 Consider the two maps $\alpha_1,\alpha_2:A_{\widehat{\mathbf m}}\to A_{\widehat{\mathbf m}}\otimes_{A_{ {\mathbf m}}} A_{\widehat{\mathbf m}}$ such that
 $$\alpha_1(a)=a\otimes 1 \textrm{\ and \ } \alpha_2(a)=1\otimes a.$$ For $i=1,2$, we obtain  the induced  maps 
 $\alpha_{i,G}: G(A_{\widehat{\mathbf m}})\to G( A_{\widehat{\mathbf m}}\otimes_{A_{ {\mathbf m}}} A_{\widehat{\mathbf m}})$. 
 We put $\pi_{i}=\alpha_{i,G}\circ \pi:\Gamma\to G (A_{\widehat{\mathbf m}}\otimes_{A_{ {\mathbf m}}} A_{\widehat{\mathbf m}})$. Since the restriction of $\pi_1$ and $\pi_2$ on $\Lambda$  coincide, the uniqueness of extensions (proved above)  implies that $\pi_1=\pi_2$. Thus, by Corollary \ref{tensor}, $\pi(\Gamma)\le G (A_{\mathbf m})$. Thus, we have constructed a representation $\pi:\Gamma\to G (A_{\mathbf m})$ that extends $\psi_{A_{\mathbf m}}$.

Consider the representation $\tau_{B_{\mathbf m}} :\Gamma\to G (B_{\mathbf m})$ induced by $\tau$. Observe that by our definition of $B$, the entries of the matrices from $\tau_{B_{\mathbf m}}(\Gamma)$ generate $B_{\mathbf m}$ over $A_{\mathbf m}$. By the other hand, the resrictions of $\tau_{B_{\mathbf m}}$ and $\alpha_{\mathbf m, G}\circ \pi$ on $\Lambda$ coincide. Hence, by  the uniqueness of extensions $\tau_{B_{\mathbf m}}=\alpha_{\mathbf m, G}\circ \pi$. This means that $B_{\mathbf m}=\alpha_{\mathbf m}(A_{\mathbf m})$, and so $\alpha_{\mathbf m}$ is onto. A contradiction. Hence $B=A$.
\end{proof}

\begin{cor}\label{gr4}
Compact Hausdorff groups are in $\mathcal C$.
\end{cor}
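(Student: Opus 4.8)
The plan is to deduce this from two facts that are already in place: Proposition~\ref{gr3} (the case of affine group schemes of finite type over a commutative ring), and the closure of $\mathcal C$ under inverse limits from Proposition~\ref{gr2}. The bridge is the structure theory of compact groups: every compact Hausdorff group is an inverse limit of compact Lie groups, and every compact Lie group is the group of real points of an affine algebraic group over $\mathbb R$.

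First I would treat a compact Lie group $L$. Embedding $L$ in some $O(n)\le\GL_n(\mathbb R)$ via a faithful orthogonal representation, its image is a Zariski-closed subgroup of $\GL_{n,\mathbb R}$ --- this is the classical fact that compact Lie groups are real algebraic --- so that $L\cong\mathbf L(\mathbb R)$, where $\mathbf L$ is the corresponding closed subgroup scheme, affine and of finite type over $\mathbb R$. Then for any Grothendieck pair $\phi\colon\Lambda\hookrightarrow\Gamma$, Proposition~\ref{gr3} applied with $A=\mathbb R$ and $G=\mathbf L$ says that every homomorphism $\Lambda\to\mathbf L(\mathbb R)$ extends uniquely along $\phi$ to $\Gamma$; that is, $\phi^{\#}_{L}$ is a bijection, so $L\in\mathcal C$. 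The one point to keep an eye on here is that the extension takes values in $L$ itself and not merely in some larger real-algebraic group containing it; taking $\mathbf L$ to be the algebraic envelope of $L$ is exactly what arranges this, and in any case, since $\widehat\phi$ is onto, no proper finite-index subgroup of $\Gamma$ contains $\Lambda$, which pins the extension back inside $L$.

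For a general compact Hausdorff group $K$, the Peter--Weyl theorem provides a directed family $\{N_i\}$ of closed normal subgroups with $\bigcap_i N_i=\{1\}$ and each $L_i:=K/N_i$ a compact Lie group, so that $K=\varprojlim_i L_i$ as topological, hence abstract, groups. Each $L_i$ lies in $\mathcal C$ by the previous step, and $\Hom(\Gamma,K)=\varprojlim_i\Hom(\Gamma,L_i)$, and similarly for $\Lambda$; since an inverse limit of bijections is a bijection, Proposition~\ref{gr2} (or the same argument carried out directly) gives that $\phi^{\#}_K$ is bijective for every Grothendieck pair $\phi\colon\Lambda\hookrightarrow\Gamma$. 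Hence $K\in\mathcal C$.

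The only genuinely external ingredient is that compact Lie groups are the real points of affine algebraic groups over $\mathbb R$; granting that, the argument is purely formal and the whole substance is already contained in Proposition~\ref{gr3}. Accordingly, the main obstacle is not in this corollary at all but in the proof of Proposition~\ref{gr3}; here one need only be careful about the two bookkeeping points above (that the Lie-group extension stays inside $L$, and that $\Hom$ commutes with the relevant inverse limits).
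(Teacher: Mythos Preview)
Your proposal is correct and follows essentially the same route as the paper: reduce to compact Lie groups via Peter--Weyl and Proposition~\ref{gr2}, then invoke Proposition~\ref{gr3} once one knows that a compact Lie group is the $\R$-points of an affine algebraic group over $\R$ (the paper phrases this last fact as Tannaka's theorem, you via the Zariski closure in $O(n)$). Your extra caution about the extension staying inside $L$ is unnecessary, since Proposition~\ref{gr3} applied directly to the scheme $\mathbf L$ already lands the extension in $\mathbf L(\R)=L$.
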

\begin{proof} Let $G$ be a compact Hausdorff group. By the Peter-Weyl theorem, $G$ is an inverse limit of compact Lie groups. Thus, in view of Proposition \ref{gr2}, we may assume that $G$ is a compact Lie group.  By Tannaka's theorem (see, for example, \cite{Canotes}) $G$ is isomorphic to $\R$-points of an algebraic groups. Therefore, by Proposition \ref{gr3}, $G\in \mathcal C$.
\end{proof}
\begin{pro}\label{gr5}
 Let $\Lambda$ be a  finitely generated residually finite group. If $\Lambda \in \mathcal C$, then $\Lambda$ is {\textbf L}GR.
\end{pro}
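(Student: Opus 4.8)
The plan is to take a Grothendieck pair $\phi:\Lambda\hookrightarrow\Gamma$ with $\Lambda\in\mathcal C$ and show that $\phi$ is an isomorphism, i.e.\ that $\Lambda=\Gamma$ inside $\Gamma$ (identifying $\Lambda$ with its image). The natural candidate to apply the hypothesis $\Lambda\in\mathcal C$ to is $G=\Gamma$ itself, or rather a group built from $\Gamma$: we want to produce a retraction $\Gamma\to\Lambda$ and argue that it must be the identity. The cleanest route is to observe that the inclusion $\iota:\Lambda\hookrightarrow\Gamma$ is an element of $\Hom(\Lambda,\Gamma)$, so since $\Lambda\in\mathcal C$, applying the definition to the Grothendieck pair $\phi$ and the target group $G=\Gamma$ gives a \emph{unique} $\tau\in\Hom(\Gamma,\Gamma)$ with $\iota=\tau\circ\phi$, that is, $\tau$ restricts to the identity on $\Lambda$.

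First I would record that such a $\tau:\Gamma\to\Gamma$ exists and is the identity on $\Lambda$. Next, I would compare $\tau$ with $\iota_\Gamma:\Gamma\hookrightarrow\Gamma$, the identity map of $\Gamma$: both $\tau$ and $\mathrm{id}_\Gamma$, when restricted to $\Lambda$, equal $\iota$. However, uniqueness in the definition of $\mathcal C$ is uniqueness of extensions \emph{of maps out of $\Lambda$ to maps out of $\Gamma$}, so it does apply here — $\tau$ and $\mathrm{id}_\Gamma$ are both elements of $\Hom(\Gamma,\Gamma)$ whose image under $\phi^{\#}_\Gamma$ is $\iota\in\Hom(\Lambda,\Gamma)$, hence $\tau=\mathrm{id}_\Gamma$ by injectivity of $\phi^{\#}_\Gamma$. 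But $\tau$ factors through $\phi$, so $\mathrm{id}_\Gamma=\tau=\tau\circ\phi$ shows $\Gamma=\tau(\Gamma)=\tau(\phi(\Lambda))\subseteq\phi(\Lambda)$, whence $\phi$ is onto and therefore an isomorphism.

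One subtlety I would address carefully: the definition of $\mathcal C$ is phrased for a fixed group $G$, and I am taking $G=\Gamma$, which is itself part of the Grothendieck pair — this is legitimate since $\Gamma$ is just some finitely generated residually finite group and nothing in the definition of $\mathcal C$ forbids choosing it as the test group; the only input used is $\Lambda\in\mathcal C$. A second point is that one should make sure $\phi^{\#}_\Gamma$ being a bijection is exactly the statement $\Lambda\in\mathcal C$ applied with this $G$, and that the map $\phi^{\#}_\Gamma$ sends $\mathrm{id}_\Gamma\mapsto\phi$ and $\tau\mapsto\tau\circ\phi$; since the two preimages of $\phi$ under $\phi^{\#}_\Gamma$ coincide by injectivity, we get $\tau=\mathrm{id}_\Gamma$ directly, and surjectivity of $\phi^{\#}_\Gamma$ guarantees that a $\tau$ with $\tau\circ\phi=\phi$ exists at all (though $\mathrm{id}_\Gamma$ already witnesses that, so it is really injectivity that does the work).

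I do not expect a genuine obstacle here: the argument is a two-line diagram chase once the right test group is identified. The only thing to be careful about is not to conflate the two distinct roles — $\Lambda\in\mathcal C$ is the hypothesis being consumed, while $\Gamma$ is being used both as the codomain of the pair and as the coefficient group $G$ — and to phrase the conclusion as ``$\phi$ is an isomorphism'' rather than merely ``$\phi$ is onto'', noting $\phi$ is injective by assumption (it is an embedding). If one prefers to avoid using $\mathrm{id}_\Gamma$ as a competing extension, an equivalent formulation is: surjectivity of $\phi^{\#}_\Gamma$ produces $\tau$ with $\tau\circ\phi=\phi=\mathrm{id}_\Gamma\circ\phi$, then injectivity of $\phi^{\#}_\Gamma$ forces $\tau=\mathrm{id}_\Gamma$, and then $\tau=\tau\circ\phi$ gives $\phi$ surjective.
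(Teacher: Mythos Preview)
There is a genuine gap: you have misread the definition of $\mathcal C$. Membership $G\in\mathcal C$ is a property of the \emph{target} group $G$: it says that for every Grothendieck pair $\phi:\Lambda'\hookrightarrow\Gamma'$, the map $\phi^{\#}_G:\Hom(\Gamma',G)\to\Hom(\Lambda',G)$ is a bijection. The hypothesis $\Lambda\in\mathcal C$ therefore lets you take $G=\Lambda$, not $G=\Gamma$. Your sentence ``applying the definition to the Grothendieck pair $\phi$ and the target group $G=\Gamma$'' would require $\Gamma\in\mathcal C$, which you do not know. Your later reassurance that ``nothing in the definition of $\mathcal C$ forbids choosing [$\Gamma$] as the test group'' is exactly the point of confusion: the test group \emph{is} the group assumed to lie in $\mathcal C$.

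Even if one granted $\Gamma\in\mathcal C$, the remainder of the argument collapses. The unique $\tau\in\Hom(\Gamma,\Gamma)$ with $\tau\circ\phi=\phi$ is tautologically $\tau=\mathrm{id}_\Gamma$; discovering this via injectivity of $\phi^{\#}_\Gamma$ tells you nothing new. Your claim that ``$\tau$ factors through $\phi$'' is unsupported (nothing in the construction gives a factorisation through $\Lambda$), and the equation ``$\tau=\tau\circ\phi$'' does not even typecheck, since the left side is a map $\Gamma\to\Gamma$ and the right side a map $\Lambda\to\Gamma$.

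The paper's argument takes $G=\Lambda$. Surjectivity of $\phi^{\#}_\Lambda:\Hom(\Gamma,\Lambda)\to\Hom(\Lambda,\Lambda)$ applied to $\mathrm{id}_\Lambda$ yields a retraction $\rho:\Gamma\to\Lambda$ with $\rho\circ\phi=\mathrm{id}_\Lambda$. Passing to profinite completions, $\widehat\rho\circ\widehat\phi=\mathrm{id}_{\widehat\Lambda}$; since $\widehat\phi$ is an isomorphism, so is $\widehat\rho$, hence $\ker\rho$ lies in the kernel of $\Gamma\to\widehat\Gamma$, which is trivial by residual finiteness. Thus $\rho$ is an isomorphism and so is $\phi$.
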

\begin{proof} Let  $\phi:\Lambda\hookrightarrow \Gamma$ be a Grothendieck pair.
Since $\Lambda \in \mathcal C$, the induced map 
$$\phi^{\#}_{\Lambda}: \Hom (\Gamma, \Lambda)\to \Hom (\Lambda, \Lambda), \ \tau\mapsto \tau\circ \phi,$$  is a bijection. Therefore, we obtain that $\Lambda$ is a retract of $\Gamma$ and since both groups are residually finite and $\widehat \phi$ is an isomorphism, $\phi$ is an isomorphism.
\end{proof}
 \cref{gr1}, \cref{gr3} and \cref{gr5} prove    \cref{leftGR} in the introduction since the free groups and the surface groups are isomorphic to some arithmetic groups.
\begin{proof}[Proof of Corollary \ref{bohr}]
The two  statements are proved similarly. Let us prove the second one. 

By Proposition \ref{gr3}  the affine complex algebraic  groups are in $\mathcal C$. Since $A(\Lambda)$ is proaffine complex algebraic  group, Proposition \ref{gr2} implies that $A(\Lambda)\in \mathcal C$. Thus, there exists $\tau:\Gamma\to A(\Lambda)$, $\alpha=\tau\circ \phi$, where $\alpha:\Lambda \to A(\Lambda)$ is the canonical map. The uniqueness of proalgebraic completion $\Gamma\to A(\Gamma)$ implies that $A(\phi)$ is an isomorphism.
\end{proof}

 \section{  Grothendieck left/right  rigid groups}\label{LRGR}
 In this section we discuss Grothendieck left/right rigid groups. \cref{leftGR} gives us a number of {\textbf L}GR groups, some of them, for example, finitely generated free groups and surface groups, are also {\textbf R}GR. In fact all LERF groups  are {\textbf R}GR. Recall that a group is called {\bf locally extended residually finite} (LERF for short) if every finitely generated subgroup is   closed in the profinite topology.  
 \begin{pro}
 Let $\Gamma$ be a finitely generated residually finite LERF group. Then $\Gamma$ is {\textbf R}GR.
 \end{pro}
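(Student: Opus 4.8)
The plan is to unpack the definitions and show directly that a Grothendieck pair $\phi:\Lambda\hookrightarrow\Gamma$ with $\Gamma$ LERF forces $\phi(\Lambda)=\Gamma$. Identify $\Lambda$ with its image $\phi(\Lambda)\leqslant\Gamma$, so that $\Lambda$ is a finitely generated subgroup of $\Gamma$. First I would recall what it means for $\widehat\phi:\widehat\Lambda\to\widehat\Gamma$ to be surjective: the closure of $\Lambda$ in the profinite topology of $\Gamma$ is all of $\Gamma$, i.e. $\Lambda$ is dense in $\widehat\Gamma$. (Surjectivity of $\widehat\phi$ is of course part of being a Grothendieck pair.)

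Next I would invoke the LERF hypothesis. Since $\Gamma$ is LERF and $\Lambda$ is a finitely generated subgroup of $\Gamma$, $\Lambda$ is closed in the profinite topology on $\Gamma$. Combining the two previous sentences: $\Lambda$ is both dense and closed in $\Gamma$ (with its profinite topology), hence $\Lambda=\Gamma$, so $\phi$ is an isomorphism and $\Gamma$ is \textbf{R}GR.

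I should make the density-plus-closed step fully rigorous, since that is the only real content. Concretely: if $g\in\Gamma\setminus\Lambda$, then because $\Lambda$ is closed in the profinite topology there is a finite-index normal subgroup $N\trianglelefteq\Gamma$ with $g\notin\Lambda N$; equivalently, the image of $\Lambda$ in the finite quotient $\Gamma/N$ is a proper subgroup. But surjectivity of $\widehat\phi$ implies that $\Lambda$ surjects onto every finite quotient of $\Gamma$, in particular onto $\Gamma/N$ — a contradiction. Hence no such $g$ exists and $\Lambda=\Gamma$.

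There is essentially no obstacle here; the statement is almost immediate once one records the two standard reformulations (Grothendieck pair $\Rightarrow$ profinitely dense image; LERF $\Rightarrow$ finitely generated subgroups are profinitely closed). The only thing to be mildly careful about is that LERF is the right hypothesis on $\Gamma$ — it concerns subgroups of $\Gamma$, which is exactly the side on which $\Lambda$ sits — so the argument gives right Grothendieck rigidity, as claimed, and no finite-presentation or further residual finiteness assumptions beyond those already in the definition of a Grothendieck pair are needed.
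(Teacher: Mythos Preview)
Your proof is correct and is essentially the same as the paper's: the paper phrases it as the contrapositive (if $\phi$ is a proper embedding then LERF gives a finite quotient of $\Gamma$ in which $\Lambda$ has proper image, so $\widehat\phi$ is not onto), while you run the same argument directly. The content is identical.
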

 \begin{proof}
 If  $\phi:\Lambda\hookrightarrow \Gamma$ is a proper embedding and $\Lambda$  is finitely generated, then there exists a finite quotient of $\Gamma$, where the image of $\Lambda$ is a proper subgroup. Hence $\widehat \phi$ is not onto.
 \end{proof}
 This applies   to many finitely generated self-similar branch groups \cite{GLN21},  all lattices in $\SL_2(\R)$ and  in $\SL_2(\CC)$ \cite{Wi04, AFW15}. Not all  fundamental groups of  compact 3-manifolds are LERF. 
However, in   \cite{Su23} Sun proved that they are {\textbf R}GR. Furthermore, all non-uniform lattices in $\SL_2(\CC)$ are virtually free-by-cyclic  \cite{Wi04, AFW15}. These groups  are also {\textbf R}GR. In fact in Section \ref{ascending} we prove that  all ascending $HNN$-extensions of finitely generated free groups are {\textbf R}GR. 
Notice that by a result of Borisov and Sapir \cite{BS05}, the ascending $HNN$-extensions of finitely generated free groups are residually finite.

So in summary, all lattices in $\SL_2(\R)\cong \SO(2,1)$ are both {\textbf L}GR and {\textbf R}GR. For $\SL_2(\CC)\cong \SO(3,1)$ all are {\textbf R}GR and the arithmetic ones are also  {\textbf L}GR.
 \begin{Questions}\
 \begin{enumerate}
 \item[(a)] Is a non-arithmetic lattice in $\SL_2(\CC)$  {\textbf L}GR?
 \item[(b)]
 Is   the fundamental group of a compact 3-manifold {\textbf L}GR?
 \item[(c)] Are lattices in $\SO(n,1)$ {\textbf L}GR or {\textbf R}GR when $n\ge 4$?

 \end{enumerate}
 \end{Questions}

In spite all of this, there are hyperbolic groups which are not {\textbf R}GR.

   \begin{pro} \label{hyperbolic}
  There exists a hyperbolic group which is not  {\textbf R}GR. \end{pro}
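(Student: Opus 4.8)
The plan is to produce a hyperbolic group $\Gamma$ together with a finitely generated subgroup $\Lambda \lneq \Gamma$ of infinite index such that $\widehat{\phi}\colon\widehat\Lambda\to\widehat\Gamma$ is an isomorphism. The natural source of such examples is the Rips construction combined with the fibre-product (Platonov--Tavgen') trick, exactly as in the work of Bridson--Grunewald, but here we only need the weaker conclusion that $\Gamma$ itself fails to be \textbf{R}GR rather than that both groups are finitely presented. First I would invoke Rips's theorem: given any finitely presented group $Q$ with no nontrivial finite quotients (e.g.\ a finitely presented infinite simple group, or one of Higman's groups), there is a short exact sequence $1\to N\to H\xrightarrow{p} Q\to 1$ with $H$ word-hyperbolic and $N$ finitely generated. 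Since hyperbolic groups are residually finite is \emph{not} known in general, one must be slightly careful, but by using the version of the Rips construction due to Wise one may take $H$ to be, in addition, residually finite (indeed virtually special), so that $H$ and all its finitely generated subgroups under consideration are residually finite.

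Next I would form the fibre product $\Lambda = \{(h_1,h_2)\in H\times H : p(h_1)=p(h_2)\}$, which sits inside $\Gamma := H\times H$. This $\Lambda$ contains $N\times N$ and surjects onto $H$ via each coordinate, hence by the $1$-$2$-$3$ theorem of Baumslag--Bridson--Miller--Short — or directly, since $N$ is finitely generated and $Q$ is finitely presented — the group $\Lambda$ is finitely generated. The key point, which is Platonov--Tavgen's observation, is that because $Q$ is finitely presented \emph{and} has no finite quotients and $H^2(Q,\mathbb Z)=0$ (or more simply because $\widehat Q=1$), the inclusion $\Lambda\hookrightarrow H\times H$ induces an isomorphism on profinite completions: one checks that $\widehat\Lambda\to\widehat H\times\widehat H$ is onto because the fibre $N$ becomes profinitely dense in each factor once $\widehat Q=1$, and injective by a diagram chase with the two short exact sequences $1\to N\times N\to\Lambda\to Q\to 1$ and $1\to N\times N\to H\times H\to Q\times Q\to1$. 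Finally $\Lambda$ has infinite index in $\Gamma=H\times H$ since it surjects onto the infinite group $Q$ under $(h_1,h_2)\mapsto p(h_1)p(h_2)^{-1}$ with nontrivial image, so $\phi$ is a nontrivial Grothendieck pair and $\Gamma=H\times H$ is not \textbf{R}GR.

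The remaining issue is that $\Gamma = H\times H$ is a product of hyperbolic groups, not itself hyperbolic. To get a genuinely hyperbolic non-\textbf{R}GR group I would instead use the variant of the Rips construction that outputs $H$ together with a finitely generated normal subgroup $N$ and arrange the target differently: take $Q$ to be a finitely presented group with $\widehat Q = 1$ that is a subgroup of a hyperbolic group in a suitable way, or — cleaner — appeal to the fact, due to Bridson (``On the subgroups of semihyperbolic groups'' / the construction in his work with Grunewald and in Kharlampovich--Myasnikov-type arguments), that one can embed the fibre product $\Lambda$ as a subgroup of a single hyperbolic group $\Gamma$ by applying a further Rips-type construction to $H\times H$ or to $Q\times Q$. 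Concretely, I expect to apply Rips's construction to the group $Q\times Q$ to obtain a hyperbolic $\Gamma$ with a finitely generated normal $K\trianglelefteq\Gamma$ and $\Gamma/K\cong Q\times Q$; pulling back the diagonal-type copy of $Q$ then yields a finitely generated $\Lambda\lneq\Gamma$ of infinite index with $\widehat\Lambda\cong\widehat\Gamma$, because again $\widehat{Q}=1$ kills the relevant obstruction.

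The main obstacle I anticipate is ensuring residual finiteness of the hyperbolic group produced: the plain Rips construction gives hyperbolicity but not residual finiteness, and a Grothendieck pair only makes sense between residually finite groups, so one genuinely needs the special/virtually-special refinement (Wise, Haglund--Wise) of Rips's construction, and must check that the finitely generated subgroup $\Lambda$ one extracts is itself residually finite (it is, being a subgroup of a residually finite group) and that its image really is profinitely dense, i.e.\ that $\widehat\phi$ is onto — this last step is where finite presentability of $Q$ and the vanishing $\widehat Q=1$ are used, and it requires the standard but slightly delicate argument that the fibre subgroup stays dense in the completion. Once residual finiteness is secured, verifying that $\widehat\phi$ is an isomorphism is the routine diagram chase sketched above, and infinite index of $\Lambda$ follows from the surjection onto the infinite group $Q$.
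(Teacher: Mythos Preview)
You have all the ingredients in your first paragraph and then take an unnecessary detour. Once you have the Wise--Rips short exact sequence $1\to N\to H\to Q\to 1$ with $H$ hyperbolic and residually finite, $N$ finitely generated, and $Q$ the Higman group (so $\widehat Q=1$ and $H_2(Q;\Z)=0$), the inclusion $N\hookrightarrow H$ is \emph{itself} a nontrivial Grothendieck pair. This is exactly part~(a) of Theorem~\ref{criterionGP}: when the kernel $N$ is finitely generated one does not need the fibre-product passage to $H\times H$ at all. Thus $H$ is already a hyperbolic group that fails to be \textbf{R}GR, and the proof ends after your first paragraph. This is precisely the paper's argument: Rips applied to the Higman group, Wise's $C'(1/6)$ cubulation plus Agol for residual finiteness, and then Theorem~\ref{criterionGP}(a).

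Your fibre-product route is correct but, as you yourself note, lands in the non-hyperbolic group $H\times H$. Your paragraph-3 repair (Rips over $Q\times Q$, then pull back the diagonal) would also work --- one checks $H_2(Q\times Q;\Z)=0$ by K\"unneth --- but it is circuitous, and even there the diagonal pullback is superfluous: the kernel $K\hookrightarrow\Gamma$ is already a Grothendieck pair by the same part~(a), so $\Gamma$ is not \textbf{R}GR before you ever form $\Lambda$. Note also that ``$\widehat Q=1$'' alone does not force the profinite isomorphism; the hypothesis $H_2(Q;\Z)=0$ in Theorem~\ref{criterionGP} is genuinely used, so your parenthetical ``or more simply'' is slightly off.
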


 The proof uses the following fundamental result which plays important role in the construction of Grothendieck pairs    \cite{PT86,BL00, BG04}.
  \begin{teo} \label{criterionGP}
 Let $G$ be a finitely generated residually finite group and $N$ a normal subgroup. Assume that $G/N$ has no non-trivial finite quotients and $H_2(G/N;\Z)=0$.
\begin{enumerate}
\item[(a)]  If $N$ is finitely generated, then $N\hookrightarrow G$ is a Grothendieck pair.
\item [(b)] Let $P=\{(g_1,g_2)\in G\times G\colon g_1N=g_2N\}$. If $G/N$ is finitely presented, then $P\hookrightarrow  G\times G$ is a Grothendieck pair.
\end{enumerate} 
 \end{teo}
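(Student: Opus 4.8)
The strategy is the classical Platonov–Tavgen'/Bridson–Grunewald argument, which I would organize around the following principle: if $\phi:\Lambda\hookrightarrow\Gamma$ is an embedding of finitely generated residually finite groups, then $\widehat\phi$ is an isomorphism if and only if (i) $\widehat\phi$ is surjective, and (ii) $\widehat\phi$ is injective. Condition (i) is equivalent to saying that $\Lambda$ surjects onto every finite quotient of $\Gamma$, i.e.\ that $\Lambda$ is dense in $\widehat\Gamma$; condition (ii) amounts to the statement that every finite quotient of $\Lambda$ extends to a finite quotient of $\Gamma$ (after possibly enlarging it), and the standard way to get this for free is to show that $\Lambda$ is \emph{efficiently} dense, or more precisely that $\widehat\Lambda\to\widehat\Gamma$ is injective because the relevant five-term exact sequences force $H_1$ and a piece of $H_2$ to match up. So the two hypotheses $G/N$ has no nontrivial finite quotients and $H_2(G/N;\Z)=0$ are precisely what one needs: the first gives surjectivity, the second gives injectivity.

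For part (a): since $G/N$ has no nontrivial finite quotients, every finite quotient of $G$ kills the image of $N$ trivially — wait, rather: every finite quotient $Q$ of $G$ has the property that the image of $N$ has index equal to $|Q/\text{im}(N)|$, which is a finite quotient of $G/N$, hence trivial; so $N$ surjects onto every finite quotient of $G$, giving surjectivity of $\widehat\iota:\widehat N\to\widehat G$. For injectivity I would use the exact sequence $1\to N\to G\to G/N\to 1$ and compare it with its profinite completion. The key input is the following: for the map $\widehat N\to\widehat G$ to be injective it suffices that $N\cap \overline{[G,G]}\cdots$ — more cleanly, one uses that the profinite completion functor is right exact and that there is an exact sequence $\widehat N\to\widehat G\to \widehat{G/N}\to 1$; since $G/N$ has no finite quotients, $\widehat{G/N}=1$, so $\widehat N\to\widehat G$ is onto, and the kernel is controlled by the homology group $H_2(G/N;\widehat{\Z})$ via a profinite version of the Lyndon–Hochschild–Serre / Stallings five-term sequence; vanishing of $H_2(G/N;\Z)$ (which, since $G/N$ is finitely generated with no finite quotients, forces $H_2(G/N;\widehat\Z)=0$ as well by a universal-coefficients argument) then kills that kernel. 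I would need $N$ finitely generated exactly so that $\widehat N$ makes sense as the profinite completion of a finitely generated group and so that the five-term sequence behaves well.

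For part (b): here $P\le G\times G$ is the fiber product, and there is a short exact sequence $1\to N\times\{1\}\to P\to G\to 1$ (projecting to the first coordinate), and also $1\to \{1\}\times N\to P\to G\to 1$; moreover the quotient $P/(N\times N)\cong G/N$ embedded diagonally. The Platonov–Tavgen' lemma is exactly the assertion that under ``no finite quotients'' plus ``$H_2(G/N;\Z)=0$'' plus ``$G/N$ finitely presented'', the inclusion $P\hookrightarrow G\times G$ induces an isomorphism of profinite completions; finite presentability of $G/N$ is what guarantees $P$ is finitely generated (generated by $N\times\{1\}$ together with a lift of a generating set of $G$ realizing the relators), which is needed for $\widehat P$ to be well-behaved. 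Surjectivity of $\widehat P\to\widehat{G\times G}$ again follows from $G/N$ having no finite quotients (any finite quotient of $G\times G$ restricted to $P$ has image of index dividing $|{\rm a\ finite\ quotient\ of}\ G/N|=1$), and injectivity follows from the five-term homology sequence for $1\to N\times N\to P\to G/N\to 1$ together with $H_1(G/N)=H_2(G/N)=0$.

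The main obstacle is making the injectivity argument precise: one must pass from the vanishing of the \emph{discrete} homology $H_2(G/N;\Z)=0$ to control of the \emph{profinite} homology $H_2(\widehat{G/N};\widehat\Z)$ (or equivalently control of the Schur multiplier of finite quotients), and then feed that through the exact sequence relating $\widehat N$ (resp.\ $\widehat P$) to $\widehat G$ (resp.\ $\widehat{G\times G}$). The clean way to do this is to argue directly at the level of finite quotients: given a finite quotient $q:\Lambda\twoheadrightarrow Q$ (with $\Lambda=N$ or $P$), one shows $q$ extends to $\Gamma$, using that $H_2$ of the (trivial-finite-quotient, $H_2$-vanishing) group $G/N$ obstructs the relevant extension problem; this is where I would cite or reprove the cohomological lemma of Platonov–Tavgen'. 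I would present (a) in full and then note that (b) follows by applying (a) ``internally'' to the pair $P\hookrightarrow G\times G$ with the role of $N$ played by $\{1\}\times N$ and the role of $G/N$ by $G/N$ again, reducing (b) to (a) plus the finite generation of $P$ that finite presentability of $G/N$ provides.
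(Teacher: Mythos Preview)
The paper does not actually prove Theorem~\ref{criterionGP}: it is quoted as a ``fundamental result which plays important role in the construction of Grothendieck pairs'' with references to \cite{PT86,BL00,BG04}, and then used as a black box in the proofs of Propositions~\ref{hyperbolic} and~\ref{f2f2}. So there is no in-paper argument to compare against; your outline is essentially the argument found in those references, and your sketch for part~(a) (surjectivity from ``no finite quotients'', injectivity from the five-term sequence and $H_2(G/N;\Z)=0$, with $H_1(G/N;\Z)=0$ coming for free) is the standard one.

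Your proposed reduction of (b) to (a), however, does not work as written. The fibre product $P$ is \emph{not} normal in $G\times G$ unless $G/N$ is abelian, so there is no quotient group to play the role of $G/N$. If instead you quotient by $\{1\}\times N$, the resulting ambient quotient $(G\times G)/(\{1\}\times N)\cong G\times(G/N)$ has many finite quotients, so the hypotheses of (a) fail. If you quotient by $N\times N$, then $(G\times G)/(N\times N)\cong(G/N)^2$ does satisfy the hypotheses, but invoking (a) would require $N\times N$ to be finitely generated---precisely what is \emph{not} assumed in (b). The entire point of the fibre-product formulation is that it applies even when $N$ is infinitely generated, with finite presentability of $G/N$ used only to ensure that $P$ itself is finitely generated (your remark on this is correct: $P=(N\times\{1\})\cdot\Delta(G)$ and $N$ is normally generated in $G$ by the finitely many relators of $G/N$). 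So (b) needs its own injectivity argument, run directly on the pair of extensions $1\to N\times N\to P\to G/N\to 1$ and $1\to N\times N\to G\times G\to(G/N)^2\to 1$, rather than a formal reduction to (a).
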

 A standard example of a non-trivial finitely presented group $Q$ without finite quotients and having trivial $H_2(Q;\Z)$ is the Higman group 
 \begin{equation} \label{Higman}
 \langle a,b,c,d:a^{-1}ba=b^2, b^{-1}cb=c^2,c^{-1}dc=d^2,d^{-1}ad=a^2\rangle.\end{equation}
 
 \begin{proof}[Proof of Proposition \ref{hyperbolic}]
 Using the construction of Rips \cite{Ri82}, we obtain that there are a finitely presented group $G$ and a normal subgroup $N$ of $G$ such that
 \begin{enumerate}
 \item[(i)] $G/N$ is the Higman group (\ref{Higman}).
 \item [(ii)] $G$ has a presentation satisfying the small cancellation condition $C^\prime(1/6)$.
 \item [(iii)] $N$ is finitely generated.
 \end{enumerate}
 The second condition implies that $G$ is of cohomological dimension 2, hyperbolic and can be cubulated (\cite{Wi04}). Thus  by \cite{Ag13},  $G$ is   virtually compact special, and so, residually finite. By  Theorem  \ref{criterionGP}, $ N\hookrightarrow G$ is a Grothendieck pair.
 \end{proof}

 Theorem \ref{criterionGP} also enables us to see that a {\textbf L}GR group is not always {\textbf R}GR.
 \begin{pro}\label{f2f2}
 There exist a {\textbf L}GR group which is not {\textbf R}GR.
 \end{pro}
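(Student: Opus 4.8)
The plan is to exhibit a concrete group that is known to be \textbf{L}GR (via membership in $\mathcal C$, using \cref{grothendieck} and \cref{gr5}) but which admits a non-trivial Grothendieck pair sitting \emph{below} it. The natural candidate is a direct product $F\times F$ of two free groups of finite rank, or more generally a product of two surface groups; by \cref{leftGR} free and surface groups are \textbf{L}GR, and since $\mathcal C$ is closed under direct products (\cref{gr2}), $F\times F\in\mathcal C$, so by \cref{gr5} the group $F\times F$ is itself \textbf{L}GR. The point is then to find a finitely generated subgroup $P\hookrightarrow F\times F$ which is a non-trivial Grothendieck pair, which shows $F\times F$ is \emph{not} \textbf{R}GR.

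The second ingredient is \cref{criterionGP}(b): if $Q$ is a finitely presented group with no non-trivial finite quotients and $H_2(Q;\Z)=0$, and if we write $Q=G/N$ for a suitable finitely generated $G$, then the fibre product $P=\{(g_1,g_2)\in G\times G : g_1N=g_2N\}$ embeds in $G\times G$ as a Grothendieck pair. So I would take $Q$ to be the Higman group \eqref{Higman} and invoke a Rips-type construction (as in the proof of \cref{hyperbolic}, or the original construction of Platonov--Tavgen\textquotesingle) producing a short exact sequence $1\to N\to G\to Q\to 1$ with $G$ a finitely generated \emph{free} group (the Rips construction can be arranged so that $G$ is free, not merely $C'(1/6)$; alternatively one uses that the kernel $N$ is finitely generated and $G$ can be taken free of finite rank mapping onto $Q$). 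Then $F:=G$ is free, $F\times F$ is \textbf{L}GR as above, and $P\hookrightarrow F\times F$ is a non-trivial Grothendieck pair by \cref{criterionGP}(b) — non-trivial because $P$ has infinite index in $F\times F$ (its index is $|Q|=\infty$). Hence $F\times F$ is not \textbf{R}GR, completing the proof.

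The main obstacle is arranging that $G$ can be taken to be \emph{free} while still having $N$ finitely generated, since a finitely generated normal subgroup of a free group of rank $\ge 2$ is either trivial or of finite index — so in fact $G$ free forces this particular route to fail. The correct fix is to \emph{not} require $N$ finitely generated for part (b): \cref{criterionGP}(b) only needs $G/N$ finitely presented (which the Higman group is) and $G$ finitely generated residually finite, so I would instead take $G=F$ to be a finitely generated free group surjecting onto the Higman group $Q$ with kernel $N$ (infinitely generated is fine), and let $P=\{(g_1,g_2)\in F\times F: g_1N=g_2N\}$. One must then check $P$ is finitely generated: this is the classical observation of Platonov--Tavgen\textquotesingle\ that when $Q=G/N$ is finitely presented the fibre product $P$ is finitely generated (generated by the diagonal copy of $G$ together with $N\times 1$, and finite presentability of $Q$ makes $N\times 1$ normally finitely generated inside $P$). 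With that in hand $P\hookrightarrow F\times F$ is a genuine Grothendieck pair of finitely generated groups, it is non-trivial since $[F\times F:P]=|Q|=\infty$, and $F\times F\in\mathcal C$ is \textbf{L}GR; so $F\times F$ is \textbf{L}GR but not \textbf{R}GR, as required.
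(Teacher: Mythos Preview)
Your proposal is correct and follows essentially the same approach as the paper: take $\Gamma=F\times F$ for $F$ a finitely generated free group, observe it is \textbf{L}GR because it lies in $\mathcal C$ (the paper phrases this as $F_4\times F_4$ being of finite index in the arithmetic group $\SL_2(\Z)\times\SL_2(\Z)$, you use closure of $\mathcal C$ under direct products --- both routes work), and then apply \cref{criterionGP}(b) with $Q=F/N$ the Higman group to produce a non-trivial Grothendieck pair $P\hookrightarrow F\times F$. One small remark: your sentence ``by \cref{leftGR} free groups are \textbf{L}GR, and since $\mathcal C$ is closed under direct products, $F\times F\in\mathcal C$'' is a non-sequitur as written (\textbf{L}GR does not imply membership in $\mathcal C$); what you need, and what the paper establishes en route to \cref{leftGR}, is that free groups themselves lie in $\mathcal C$.
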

 \begin{proof}
 Lett $F_4$ be  the free group of rank 4 and $\Gamma=F_4\times F_4$.  $\Gamma$ is  a finite index  subgroup of $\SL_2(\Z)\times \SL_2(\Z)$ and hence {\textbf L}GR by Corollary \ref{leftGR}.
 
Let $N$ be a normal subgroup of $F_4$ so that $F_4/N$ is the Higman group. Then, by Theorem \ref{criterionGP},   $\Gamma$ is not {\textbf R}GR   
 \end{proof}
 This is the example produced in \cite{PT86} as a first counter example to the Grothendieck question. More examples which are {\textbf L}GR and not {\textbf R}GR are  given in \cite{BL00}.   However we do not know whether {\textbf R}GR groups are always {\textbf L}GR.   \begin{Question}
  Is there an {\textbf R}GR group, which is not {\textbf L}GR?\end{Question}

 Observe that the subgroup $N$ in the proof of Proposition \ref{hyperbolic} is not finitely presented. Recall that a group is called {\bf coherent} if all its finitely generated subgroups are finitely presented. For example, the ascending $HNN$-extensions of finitely generated free groups  are  coherent  by  a result of Feighn and Handel \cite{FH99}.  For those we will show in Section \ref{HNN} that they are {\textbf R}GR .
 \begin{Question}
 Is a coherent residually finite  (hyperbolic) group {\textbf R}GR?
 \end{Question}

 Clearly if $G$ is a {\textbf L}GR or  {\textbf R}GR group, then so is a group that contains $G$ as a subgroup of finite index. But, we do not know whether the same conclusion holds for subgroups of finite index.
 \begin{Question}
 Is the property to be {\textbf L}GR or  {\textbf R}GR a commensurability invariant?
 \end{Question}
  
  \section{Ascending $HNN$-extensions of finitely generated free groups}\label{ascending}
  \label{HNN}
  Let $F$ be a finitely generated free group and $\alpha:F\to F$ an injective endomorphism of $F$. Then the group
$$M_\alpha=  \langle F,t |  t^{-1}ft=\alpha (f), f\in F\rangle $$
is called {\bf the mapping torus of $\alpha$} or {\bf ascending $HNN$ extension of $F$ corresponding
to $\alpha$}. We denote by $\kappa:F\to M_\alpha$ the canonical embedding of $F$ into $M_\alpha$ and by $j:M_\alpha\to\widehat{M_\alpha} $ the canonical homomorphism of $M_\alpha$ to its profinite completion.   If $H$ is a subgroup of $F$,  denote by  $\overline H$ be the closure of $j\circ \kappa(H)$ in $\widehat{M_\alpha}$.  We will also denote by $\widetilde H$ the closure of $H$ is $\widehat F$.

  The main result of this section is the following theorem.
 \begin{teo}\label{separation}
  Let $F$ be a finitely generated free group and $\alpha:F\to F$ an injective endomorphism of $F$. Let $H$ be a finitely generated  $\alpha$-invariant subgroup of $F$ and $w\in F$. Then $j\circ \kappa(w)\in \overline H$ if and only if there exists $n\in \N$ such that $\alpha^n(w)\in H$.
 \end{teo}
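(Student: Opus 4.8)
The plan is to prove the nontrivial direction: if $j\circ\kappa(w)\in\overline H$, then $\alpha^n(w)\in H$ for some $n\in\N$. (The converse is immediate: if $\alpha^n(w)\in H$ then, since $\alpha(f)=t^{-1}ft$ in $M_\alpha$, the element $\kappa(w)$ is conjugate to $\kappa(\alpha^n(w))\in\kappa(H)$ by $t^n$, but this alone does not land $\kappa(w)$ in the closure of $\kappa(H)$; instead one observes directly that $\kappa(\alpha^n(w))\in\kappa(H)\subseteq\overline H$ forces $j\circ\kappa(w)=j(t^n)^{-1}\,j\circ\kappa(\alpha^n(w))\,j(t^n)\in\overline{t^n\kappa(H)t^{-n}}$, and that $t^n\kappa(H)t^{-n}\subseteq\kappa(H)$ whenever $H$ is $\alpha$-invariant — here we use $\alpha(H)\subseteq H$ to move in the \emph{opposite} direction, which requires a small argument, or one simply uses that $w$ and $\alpha^n(w)$ have the same image in every finite quotient of $M_\alpha$ because they are conjugate, hence $j\circ\kappa(w)\in\overline{H}$ iff $j\circ\kappa(\alpha^n(w))\in\overline H$, and the latter is clear.)

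First I would reduce to controlling finite quotients of $M_\alpha$. By definition $j\circ\kappa(w)\in\overline H$ means: for every finite-index normal subgroup $K\trianglelefteq M_\alpha$, the image of $w$ in $M_\alpha/K$ lies in the image of $H$. So it suffices to produce, assuming $\alpha^n(w)\notin H$ for all $n$, a single finite quotient of $M_\alpha$ separating $w$ from $H$. The key structural input is that $F$ is free (hence LERF, by Marshall Hall) and that $H$ is $\alpha$-invariant and finitely generated. I would use the $\alpha$-invariance to set up a descending/ascending system: since $H\le F$ is finitely generated, $H$ is a free factor of a finite-index subgroup, and $\alpha(H)\le H$. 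The hypothesis $\alpha^n(w)\notin H$ for all $n\ge 0$ is what we negate; combined with LERF of $F$, for each $n$ there is a finite quotient $F\to F/N_n$ with $\alpha^n(w)\notin H N_n$.

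The main step — and the main obstacle — is to promote these quotients of $F$ to a quotient of the whole mapping torus $M_\alpha$ in which the $t$-action is realized. The natural device is the one from the Borisov–Sapir circle of ideas (used to prove residual finiteness of these groups): find a finite-index $\alpha$-invariant subgroup, or more precisely a finite quotient $q:F\to Q$ and an automorphism $\bar\alpha$ of $Q$ with $q\circ\alpha=\bar\alpha\circ q$, so that $Q\rtimes_{\bar\alpha}\Z$ receives a map from $M_\alpha$; then pass to a further finite quotient of $Q\rtimes_{\bar\alpha}\langle t\rangle$ by killing a suitable power of $t$. Concretely I would: (i) choose a characteristic-enough finite-index subgroup $F_0\le F$ with $\alpha(F_0)\le F_0$ (using that $\alpha$ has a well-defined "action on the tree of finite-index subgroups" and the Borisov–Sapir/Hirshon-type argument), arranging that $H\cap F_0$ still omits all $\alpha^n(w)$; (ii) form the induced endomorphism on $F/\mathrm{core}$, which becomes an \emph{automorphism} $\bar\alpha$ of a finite group $Q$ after stabilizing; (iii) let $d$ be the order of $\bar\alpha$ and consider $M_\alpha\to Q\rtimes\Z/d\Z =: \widebar M$; (iv) finally, since $w\in F$ maps into $Q$, and the image of $H$ in $Q$ is a proper subset avoiding the image of $w$ (by the choice in (i), using $\alpha^0(w)=w\notin H$ suffices once the quotient is fine enough), conclude $j\circ\kappa(w)\notin\overline H$, contradicting the hypothesis. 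The delicate point throughout is keeping the separation of $w$ from $H$ alive while passing to the finite-index $\alpha$-invariant subgroup in step (i): this is where the full strength of "$\alpha^n(w)\notin H$ for all $n$", not just "$w\notin H$", gets used, because shrinking $F$ to an $\alpha$-invariant finite-index subgroup only controls $w$ up to applying powers of $\alpha$. I would handle this by a compactness/König's-lemma argument over the inverse system of finite quotients of $F$, organized so that the $\alpha$-orbit of $w$ and the (finitely generated, hence eventually "stable") subgroup $H$ interact in a pro-$\pi$ completion on which $\alpha$ acts continuously, reducing the statement to the LERF-type separation inside $\widehat F$ together with continuity of the $\alpha$-action — i.e. to the identity $\bigcap_n \widetilde{H}\,\widehat\alpha^{-n}$ behaving as expected, where $\widehat\alpha:\widehat F\to\widehat F$ is the induced map.
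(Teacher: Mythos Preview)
Your proposal has a genuine gap at the core of the nontrivial direction. Steps (i)--(iv) aim to produce a finite quotient of $M_\alpha$ separating $w$ from $H$, but the mechanism you sketch does not deliver this. If $V\trianglelefteq F$ is verbal of finite index, $\alpha$ induces an endomorphism $\bar\alpha$ of $F/V$, and to make it an automorphism you must pass to the eventual image $\bigcap_k\bar\alpha^k(F/V)$. In the resulting finite quotient of $M_\alpha$ the image of $\kappa(w)$ is the class of $\alpha^{k}(w)$ for $k\ge k(V)$ large; so what you need is \emph{one} $V$ with $\alpha^{k(V)}(w)\notin HV$. LERF of $F$ only gives, for each fixed $n$, some $V_n$ with $\alpha^n(w)\notin HV_n$, while $k(V)$ grows as $V$ shrinks; no compactness or K\"onig's-lemma argument selects a $V$ that works for its own $k(V)$. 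In the profinite picture of \cref{profiniteHNN} the hypothesis says exactly that $i(w)=\lim_n\widehat\alpha^{\,n!}(w)$ lies in $Q=\bigcap_n\widehat\alpha^n(\widetilde H)\subseteq\widetilde H$, and when $H$ has infinite index $\widetilde H$ is closed but not open, so the limit lying in $\widetilde H$ does not force any term of the sequence to lie there. This is precisely why already the case $H=\{1\}$ is the nontrivial Borisov--Sapir theorem that $M_\alpha$ is residually finite, for which no elementary profinite proof is known; your outline would in particular reprove that result, which it cannot.

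The paper's proof proceeds differently. After reducing to the case where $H$ is a free factor of $F$ (via Marshall Hall and a verbal-subgroup trick), it adapts the Borisov--Sapir method: one represents $F$ into $\GL_4$ over the algebraic closure of a finite field so that $H$ lands in a specified block subgroup $P\le\GL_4$; the endomorphism $\alpha$ becomes a polynomial self-map $\Phi$ of a variety $N_4^l\times M_4^{k-l}$; and one invokes the quasi-fixed-point theorem of \cite{BS05} to manufacture the separating finite quotient $M_\alpha\to \PGL_4(\F)\wr C_q$. A further ingredient with no analogue in your sketch is Tomanov's theorem on generalized group identities in linear groups, used to show that the block subgroup $P$ detects membership in $H$ exactly. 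Your profinite/LERF framework provides no substitute for this algebro-geometric input.
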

 The case $H=1$ of the theorem implies that $M_\alpha$ is residually finite (see \cite[Lemma 2.1]{BS05}). Our motivation is that this theorem implies the following result.
 \begin{teo}\label{GRHNN} An ascending HNN extension of a finitely generated free group  is {\textbf R}GR.
 \end{teo}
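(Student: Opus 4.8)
The plan is to reduce the statement "$M_\alpha$ is {\textbf R}GR" to the separation statement of Theorem \ref{separation}. So suppose $\phi:\Lambda\hookrightarrow M_\alpha$ is a Grothendieck pair; we must show $\Lambda=M_\alpha$. First I would locate the image of $\Lambda$ relative to the distinguished normal-type subgroup $\kappa(F)$. Since $M_\alpha/\langle\!\langle F\rangle\!\rangle\cong \Z$ (the exponent sum of $t$), composing $\phi$ with the projection $M_\alpha\to\Z$ gives a homomorphism $\Lambda\to\Z$ which, because $\widehat\phi$ is onto (hence the composite $\widehat\Lambda\to\widehat\Z$ is onto and $\Lambda\to\Z$ has dense, therefore full, image), is surjective. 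Pick $s\in\Lambda$ mapping to a generator; replacing $s$ by $t\cdot(\text{element of }\kappa(F))$ we may assume $\phi(s)=t$ up to adjusting notation. Set $K=\Lambda\cap\phi^{-1}(\kappa(F))$, a normal subgroup with $\Lambda/K\cong\Z$ generated by $sK$, so $\phi(K)\le\kappa(F)$ and $\Lambda=\langle K,s\rangle$.

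Next I would show $K$ is finitely generated and $\alpha$-invariant in a suitable sense. Since $\Lambda$ is finitely generated and $\Lambda/K\cong\Z$, the subgroup $K$ need not be finitely generated in general, but here $\phi(K)\le\kappa(F)$ and $\phi$ is injective, so $\phi(K)$ is a subgroup of the free group $F$; moreover conjugation by $s$ sends $K$ to itself, so $\phi(K)$ is invariant under conjugation by $t$, i.e. $\alpha(\phi(K))\subseteq\phi(K)$ and also $t\,\phi(K)\,t^{-1}\subseteq\phi(K)$ inside $M_\alpha$. This forces $\phi(K)$ to be an $\alpha$-invariant subgroup of $F$; the key point to extract is that it is \emph{finitely generated}, which I expect to follow from coherence of $M_\alpha$ (Feighn--Handel \cite{FH99}): $\Lambda$ is finitely generated, hence finitely presented, and then $K=\ker(\Lambda\to\Z)$ is finitely generated because a finitely presented group that is an ascending HNN-type extension over $\Z$ with finitely generated... more carefully, one uses that $\phi(K)$ sits inside $F$ as an $\alpha$-invariant, $t$-conjugation-invariant subgroup and applies the structure of such subgroups; alternatively, $\phi(\Lambda)$ is itself an ascending HNN extension of $\phi(K)$ over $\Z$, is finitely generated, hence coherent-subgroup arguments give $\phi(K)$ finitely generated. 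Let $H=\phi(K)\le F$, a finitely generated $\alpha$-invariant subgroup.

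Now I apply Theorem \ref{separation}. We have $\phi(\Lambda)=\langle H,t\rangle\le M_\alpha$, and we want $\langle H,t\rangle=M_\alpha$, equivalently $H=F$. Since $\widehat\phi$ is an isomorphism, $\phi(\Lambda)$ is dense in $\widehat{M_\alpha}$; in particular for any $w\in F$ the element $j\circ\kappa(w)$ lies in the closure of $\phi(\Lambda)=\langle H,t\rangle$. I would now argue that the closure of $\langle H,t\rangle$ meets $\overline F=\overline{j\circ\kappa(F)}$ in exactly $\overline H$: indeed $\langle H,t\rangle$ is the mapping torus of $\alpha|_H$ on $H$, its intersection with $\kappa(F)$ is $\bigcup_n \alpha^{-n}(H)\cap F$ — but $H$ being $\alpha$-invariant, this is controlled, and passing to closures using Theorem \ref{separation} identifies $\langle H,t\rangle\cap\overline F$ (the profinite closure intersected with $\overline F$) with $\{g\in\overline F: \text{some }\widehat\alpha^n(g)\in\overline H\}$. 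Then $w\in F$ with $j\circ\kappa(w)\in\overline{\langle H,t\rangle}$ gives, by Theorem \ref{separation} applied to $H$ and $w$, that $\alpha^n(w)\in H$ for some $n$. Since $H$ is $\alpha$-invariant this does not immediately give $w\in H$, so the final move is: run the same argument with $H$ replaced by the (still finitely generated, $\alpha$-invariant) subgroup and use that $\alpha$ is injective on $F$ and $\widehat\phi$ surjective to conclude that in fact every $w$ lies in $H$ — e.g. because the quotient map $F\to F/H_{\mathrm{sat}}$ where $H_{\mathrm{sat}}=\bigcup_n\alpha^{-n}(H)$ would otherwise produce a nontrivial finite quotient of $M_\alpha$ in which $\phi(\Lambda)$ is not dense, contradicting that $\widehat\phi$ is onto. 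Hence $H=F$, so $\phi(\Lambda)=M_\alpha$ and the Grothendieck pair is trivial.

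The main obstacle I anticipate is exactly the step extracting finite generation of $K=\ker(\Lambda\to\Z)$ and then pinning down the intersection of the closure of the sub-mapping-torus $\langle H,t\rangle$ with $\overline F$ precisely enough to feed Theorem \ref{separation}; the separation theorem is tailor-made for "$j\circ\kappa(w)\in\overline H$", so the real work is showing that profinite density of $\phi(\Lambda)$ translates into the hypothesis $j\circ\kappa(w)\in\overline H$ for the right finitely generated $\alpha$-invariant $H$, and then upgrading "$\alpha^n(w)\in H$ for some $n$" to "$w\in H$" via residual finiteness of $M_\alpha$ and a saturation argument.
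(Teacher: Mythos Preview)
Your overall strategy---locate a finitely generated $\alpha$-invariant $H\le F$ with $\phi(\Lambda)=\langle H,t\rangle$ and then feed it to Theorem \ref{separation}---is the paper's strategy, but two steps are genuinely missing. The decisive one is producing $H$. Your set-up conflates $\phi^{-1}(\kappa(F))$ with $\ker(\Lambda\to\Z)$: when $\alpha$ is not surjective, $\kappa(F)$ is not normal in $M_\alpha$, so the kernel of the map to $\Z$ lands in $\bigcup_n t^n\kappa(F)t^{-n}$, not in $\kappa(F)$. Taking instead $H=\phi(\Lambda)\cap\kappa(F)$ gives an $\alpha$-invariant subgroup, but coherence only says $\phi(\Lambda)$ is finitely presented; it does not by itself make $H$ finitely generated. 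The paper fills this with a real idea you are missing: Feighn--Handel's structural result \cite[Proposition~2.3]{FH99} yields a presentation $\phi(\Lambda)=\langle t,A,B\mid tat^{-1}=\alpha(a),\ a\in A\rangle$ with $A,B\subset F$ finite, and then L\"uck approximation (so $b_1^{(2)}(\Lambda)=b_1^{(2)}(M_\alpha)=0$) combined with the inequality $b_1^{(2)}\ge \mathrm{def}-1$ forces $B=\emptyset$. Only then is $\phi(\Lambda)$ the ascending HNN extension of the finitely generated free group $H=\langle A\rangle$, and Proposition \ref{subgroupHNN} gives the hypothesis $j\circ\kappa(F)\le\overline H$ needed for Theorem \ref{separation}.

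Your endgame is also mis-aimed. Once Theorem \ref{separation} is applied to a finite generating set of $F$, one obtains a single $n$ with $\alpha^n(F)\le H$; the correct conclusion is $F\le t^nHt^{-n}\le\phi(\Lambda)$, hence $\phi(\Lambda)=M_\alpha$. Your target $H=F$ is too strong and in general false (take $H=\alpha(F)$ when $\alpha$ is not onto: then $\langle H,t\rangle=M_\alpha$ but $H\ne F$), so the saturation argument you sketch cannot succeed as stated.
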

 
 Before proving Theorem  \ref{separation} and Theorem \ref{GRHNN} ,
 let us first describe the structure of $\widehat M_\alpha$. It was  previously  studied in   \cite[Theorem 5.8]{GJPZ14}.
\begin{pro}\label{profiniteHNN}
 The group  $\widehat M_\alpha$  is isomorphic to   $\widehat {\langle t\rangle}\ltimes P$, where  
\begin{enumerate}
\item  $P=\displaystyle \bigcap_{n\in \N} \widehat \alpha^n(\widehat F)$ and  
\item the restriction of $\widehat \alpha$ on $P$ is the automorphism that defines the semidirect product  $\widehat {\langle t\rangle}\ltimes P$.
\end{enumerate}
\end{pro}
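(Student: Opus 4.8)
The plan is to realise $\widehat{M_\alpha}$ as a semidirect product built from $\widehat{\langle t\rangle}$ and the closure $\overline F$ of $j\kappa(F)$ in $\widehat{M_\alpha}$, and then to identify $\overline F$ with $P$ together with the action of $\widehat\alpha$. The retraction $M_\alpha\to\langle t\rangle$ killing $F$ induces a continuous surjection $p\colon\widehat{M_\alpha}\to\widehat{\langle t\rangle}$. Now $\widehat{M_\alpha}/\overline F$ is topologically generated by the image of $t$, hence procyclic, and $p$ factors through it onto $\widehat{\langle t\rangle}$; since a procyclic group admitting a continuous surjection onto $\widehat{\langle t\rangle}\cong\widehat{\Z}$ must be isomorphic to $\widehat{\Z}$ (a continuous surjective endomorphism of $\widehat{\Z}$ is injective), this forces $\ker p=\overline F$. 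The same argument shows that $\overline{\langle t\rangle}\hookrightarrow\widehat{M_\alpha}$ followed by $p$ is an isomorphism onto $\widehat{\langle t\rangle}$; as then $\overline{\langle t\rangle}\cap\overline F=1$ and $\overline F\,\overline{\langle t\rangle}$ is a closed subgroup (product of two compacta) surjecting onto $\widehat{\langle t\rangle}$, we obtain $\widehat{M_\alpha}=\overline F\rtimes\overline{\langle t\rangle}$ with $\overline{\langle t\rangle}\cong\widehat{\langle t\rangle}$.

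Next I would describe $\overline F$. In a finite quotient $Q$ of $M_\alpha$, conjugation by the image $\bar t$ of $t$ is an automorphism of $Q$ carrying the finite group $\overline F_Q:=\operatorname{image}(F)$ into, hence onto, itself; so $\overline F_Q$ is normal with cyclic quotient, and the relation $t^{-1}ft=\alpha(f)$ shows the kernel $U_Q$ of $F\twoheadrightarrow\overline F_Q$ satisfies $\alpha^{-1}(U_Q)=U_Q$. Conversely every finite-index normal $U\trianglelefteq F$ with $\alpha^{-1}(U)=U$ arises this way (form $(F/U)\rtimes C$ with $C$ cyclic acting through the automorphism $\alpha$ induces on $F/U$). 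Hence $\overline F=\widehat F/\mathcal K$, where $\mathcal K$ is the intersection of all open normal $W\trianglelefteq\widehat F$ with $\widehat\alpha^{-1}(W)=W$, and the canonical surjection $q\colon\widehat F\to\overline F$ satisfies $q\widehat\alpha=\tau q$ for the automorphism $\tau$ of $\overline F$ given by conjugation by $j(t^{-1})$. Since $F$ is free, $\widehat\alpha$ is injective (finitely generated subgroups of free groups are separable), so $\widehat\alpha$ restricts to an automorphism of the closed subgroup $P=\bigcap_n\widehat\alpha^n(\widehat F)$, which is moreover topologically finitely generated (its image in any finite quotient of $\widehat F$ is the image of some $\widehat\alpha^{K}(\widehat F)\cong\widehat F$). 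From $q\widehat\alpha^n=\tau^n q$ we get $q(\widehat\alpha^n(\widehat F))=\overline F$ for all $n$; as the $\widehat\alpha^n(\widehat F)$ form a decreasing chain of compacta this gives $q(P)=\bigcap_n q(\widehat\alpha^n(\widehat F))=\overline F$, so $q|_P\colon P\to\overline F$ is surjective.

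The crux is that $q|_P$ is injective, equivalently that the groups $P\cap W$, as $W$ ranges over the $\widehat\alpha^{-1}$-invariant open normal subgroups of $\widehat F$, form a neighbourhood basis of $1$ in $P$. Let $O\trianglelefteq P$ be open normal. First I would shrink $O$ to one invariant under $\widehat\alpha|_P$: since $P$ is topologically finitely generated it has finitely many subgroups of any given index, so $\{(\widehat\alpha|_P)^n(O)\}_{n\in\Z}$ is finite and its intersection is an open $\widehat\alpha|_P$-invariant normal subgroup of $P$ contained in $O$; thus assume $\widehat\alpha|_P(O)=O$. Pick any open normal $W_1\trianglelefteq\widehat F$ with $P\cap W_1\subseteq O$. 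Each $\widehat\alpha^{-k}(W_1)$ is open normal of index dividing $[\widehat F:W_1]$ (the map $\widehat F/\widehat\alpha^{-k}(W_1)\hookrightarrow\widehat F/W_1$ induced by $\widehat\alpha^{k}$ is injective), so $W_\infty:=\bigcap_{k\ge 0}\widehat\alpha^{-k}(W_1)$ contains the intersection of the finitely many open subgroups of $\widehat F$ of index at most $[\widehat F:W_1]$, and is therefore open. One has $W_\infty\subseteq\widehat\alpha^{-1}(W_\infty)$; applying $\widehat\alpha^{-k}$ shows $\bigl(\widehat\alpha^{-k}(W_\infty)\bigr)_{k\ge 0}$ is an increasing chain of open subgroups of bounded index, which therefore stabilises at an open normal $W_*$ with $\widehat\alpha^{-1}(W_*)=W_*$. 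Finally, for any open normal $V\trianglelefteq\widehat F$ one checks $P\cap\widehat\alpha^{-k}(V)=(\widehat\alpha|_P)^{-k}(P\cap V)$, so writing $W_*=\widehat\alpha^{-k_0}(W_\infty)$ one gets $P\cap W_*=(\widehat\alpha|_P)^{-k_0}(P\cap W_\infty)\subseteq(\widehat\alpha|_P)^{-k_0}(O)=O$, as required. Thus $q|_P$ is a continuous bijection of profinite groups, hence an isomorphism.

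Transporting $\tau$ through $q|_P$ then yields $\widehat{M_\alpha}=P\rtimes\overline{\langle t\rangle}$ in which conjugation by $j(t^{-1})$ induces $\widehat\alpha|_P$ on $P$, which is the asserted structure (the direction of the action being the usual bookkeeping around $t^{-1}ft=\alpha(f)$, i.e.\ a choice of $t$ versus $t^{-1}$). I expect the only genuinely delicate step to be the separation argument of the third paragraph: the naive candidate $\bigcap_k\widehat\alpha^{-k}(W_1)$ for a $\widehat\alpha^{-1}$-invariant separating subgroup is \emph{not} invariant, because $k\mapsto\widehat\alpha^{-k}(W_1)$ need not be monotone, so one is forced first to make the target $O$ invariant inside $P$, then to intersect, and only then to exploit the resulting one-sided invariance $W_\infty\subseteq\widehat\alpha^{-1}(W_\infty)$ in a stabilisation argument. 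The remaining ingredients are Hopficity of $\widehat{\Z}$, compactness, and the elementary structure of finite quotients of an ascending HNN extension; this reproves \cite[Theorem~5.8]{GJPZ14}, which could also be cited outright.
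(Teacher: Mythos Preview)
Your argument is correct and follows a route genuinely different from the paper's. The paper constructs the isomorphism in the forward direction: since every verbal open subgroup $V\trianglelefteq\widehat F$ satisfies $\widehat\alpha(V)\le V$, the induced endomorphism of the finite group $\widehat F/V$ is eventually idempotent, and the limit $i(f)=\lim_n\alpha^{n!}(f)$ assembles into a retraction $\widehat i\colon\widehat F\to P$; one then checks $\ker\widehat i=\ker\widehat\kappa$, so the map $\epsilon_\alpha\colon M_\alpha\to\widehat{\langle t\rangle}\ltimes P$ sending $t\mapsto t$, $f\mapsto i(f)$ induces an isomorphism of profinite completions. You instead work backwards: first extract the semidirect decomposition $\widehat{M_\alpha}=\overline F\rtimes\widehat{\langle t\rangle}$ from the split surjection to $\widehat{\Z}$ and Hopficity, then identify $\overline F$ with $\widehat F$ modulo the $\widehat\alpha^{-1}$-invariant open normals, and finally prove $q|_P\colon P\to\overline F$ is bijective via the stabilisation argument producing $W_*$. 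The paper's idempotent-limit trick is slicker and hands you the explicit map $\epsilon_\alpha$ that is reused in Proposition~\ref{subgroupHNN} and the proof of Theorem~\ref{separation}; your approach is more structural, makes the role of the $\widehat\alpha^{-1}$-invariant subgroups completely transparent, and sidesteps the appeal to Nikolov--Segal. Both establish the same core fact $P\cong\widehat F/\ker\widehat\kappa$, but dually: the paper retracts $\widehat F$ onto $P$, while you separate points of $P$ by invariant open subgroups.
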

The following example illustrates the meaning of the proposition in a straightforward case.
\begin{exa}
Consider  $\alpha :\Z\to \Z$, $1\mapsto p$. Then $M_\alpha\cong\Z\ltimes  \Z[\frac 1p]$ and $P\cong \prod_{q\ne p}\Z_q$.
\end{exa}
\begin{proof} By a theorem of Nikolov and Segal \cite{NS07}, the profinite completion of a finitely generated profinite group is isomorphic to the group itself. In this proof we will not distinguish between them.

Let $V$ be a verbal open subgroup of $\widehat F$. Since $V$ is verbal $\widehat \alpha(V)\le V$. Thus,  $\widehat \alpha$ induces an endomorphism $$\overline \alpha: \widehat F/V\to \widehat F/V.$$
Since $\widehat F/V$ is finite, for any $f\in \widehat F$ we can define $\lim_{n\to \infty} \overline{\alpha}^{n!}(fV)$. Observe that verbal open subgroups form a base of neighborhoods of the identity of $\widehat F$. Thus, for any $f\in   F$ we can define
$$  i:  F\to P,\  f\mapsto \lim_{n\to \infty} {\alpha}^{n!}(f)\in P.$$  It is clear that $  \alpha\circ  i= i \circ  \alpha$. Observe that $i$ induces the homomorphism 
$$ \widehat  i:   \widehat F\to P,\  g\mapsto \lim_{n\to \infty} \widehat{\alpha}^{n!}(g)\in P.$$
Let  $g\in P$ and let $V$ be again a   verbal open subgroup of $\widehat F$. Then there exists $h\in \widehat F$ such that $g=\widehat \alpha^{|F:V|!}(h)$. Therefore, $\widehat i(gV)=gV$. Thus, $\widehat i$ fixes the elements of $P$, and so $\widehat i$ is a retract. In particular, $i(F)$ is dense in $P$.

Since, $  \alpha\circ  i= i \circ  \alpha$ there exists the map
$$\epsilon :M_\alpha\to \widehat {\langle t\rangle}\ltimes_{\widehat \alpha} P, \ t\mapsto t, \ \kappa(f)\mapsto i(f). $$
Observe that the diagram
$$\begin{array}{ccc}
M_\alpha & \xrightarrow {\epsilon} &\widehat {\langle t\rangle}\ltimes_{\widehat \alpha} P\\
 \uparrow_{\kappa}&&||\\
F&\xrightarrow{i}&\widehat {\langle t\rangle}\ltimes_{\widehat \alpha} P \end{array}$$ is commutative. Hence  the diagram
$$\begin{array}{ccc}
\widehat{M_\alpha} & \xrightarrow {\widehat{\epsilon}} &\widehat {\langle t\rangle}\ltimes_{\widehat \alpha} P\\
 \uparrow_{\widehat{\kappa}}&&||\\
\widehat F&\xrightarrow{\widehat i}&\widehat {\langle t\rangle}\ltimes_{\widehat \alpha} P
\end{array}$$ is also commuttaive. In particular, $\ker \widehat \kappa \le \ker \widehat i$.

We want to show that $\widehat \epsilon$ is an isomorphism.
It is onto since $i(F)$ is dense in $P$.  Let $K$ be an open  normal subgroup of   $\widehat{M_\alpha}$. Put $K(F)=F\cap ( j\circ \kappa)^{-1}( K )$. 
Then $\alpha$ induces an automorphism on $F/ K(F)$. Therefore, $\ker \widehat  i\le \widetilde {K(F)}$. 
Since $$\ker \widehat \kappa=\bigcap_{K\unlhd_o \widehat{M_\alpha}} \widetilde{K(F)},$$  $\ker \widehat   i\le\ker \widehat  \kappa$. Thus, $\ker \widehat   i=\ker \widehat  \kappa$ and so $\ker \widehat \epsilon\cap \im \widehat \kappa=\{1\}$. On the other hand, $\widehat{M_\alpha}\cong \widehat {\langle t\rangle}\ltimes \im \widehat \kappa$, and so $\ker \widehat \epsilon\le \im \widehat \kappa$. Hence  $\widehat  \epsilon$ is injective.
\end{proof}
We  denote the map $\epsilon$ from the previous proof by $\epsilon_\alpha$. Since we have the following commutative diagram
$$\begin{array}{ccc}
M_\alpha & \xrightarrow {j} &\widehat{M_\alpha}\\
&\searrow_{\epsilon_\alpha}&\downarrow_{\widehat{\epsilon_\alpha}}\\
&& \widehat {\langle t\rangle}\ltimes_{\widehat \alpha} P\end{array}$$
and $\widehat{\epsilon_\alpha}$ is an isomorphism, the map $\epsilon_\alpha:M_\alpha\to  \widehat {\langle t\rangle}\ltimes_{\widehat \alpha} P$ provides all information about the embedding of $M_\alpha$ into its profinite completion.

Let $H$ be a finitely generated $\alpha$-invariant subgroup of $F$. We denote by $\beta:H\to H$ the restriction of $\alpha$ on $H$. Then the subgroup of $M_\alpha$ generated by $t$ and $H$ is isomorphic to $M_\beta$. Moreover, the inclusion map $\phi:M_\beta \hookrightarrow M_\alpha$  is onto if and only if there exists $n\in\N$ such that $\alpha^n(F)\le H$.

 Proposition \ref{profiniteHNN} gives us a description of $\widehat{M_\beta}$: there exists a homomorphism  $\epsilon_\beta:\widehat M_\beta\to \widehat {\langle t\rangle}\ltimes Q$, such that  
\begin{enumerate}
\item  $Q=\displaystyle \bigcap_{n\in \N} \widehat \beta^n(\widehat H)$,
\item the restriction of $\widehat \beta$ on $Q$ is the automorphism that defines the semidirect product  $\widehat {\langle t\rangle}\ltimes Q$ and
\item $\widehat{\epsilon_\beta}$ is an isomorphism.
\end{enumerate}
Observe that since $F$ is LERF, $\widetilde H\cong \widehat H$ and so $Q$   can be seen as a subgroup of $P$. By our definition of $\beta$, the restriction of $\widehat \alpha$ on $Q$ is $\widehat \beta$. Hence we have a canonical embedding
$\widehat {\langle t\rangle}\ltimes_{\widehat \beta} Q  \hookrightarrow  \widehat {\langle t\rangle}\ltimes_{\widehat \alpha} P$.
This description allows us to understand also the map $\widehat \phi$.
\begin{pro}\label{subgroupHNN}
The following diagram is commutative.
$$\begin{array}{ccc}
\widehat{M_\beta} & \xrightarrow{\widehat \phi}& \widehat{M_\alpha}\\
\downarrow^{\epsilon_\alpha}&&\downarrow^{\epsilon_\alpha}\\
\widehat {\langle t\rangle}\ltimes_{\widehat \beta} Q& \hookrightarrow & \widehat {\langle t\rangle}\ltimes_{\widehat \alpha} P.
\end{array}$$
In particular,
\begin{enumerate}
\item The map $\widehat \phi$ is injective.
\item  The map $\widehat \phi$ is surjective if and only if $j\circ \kappa(F)\le \overline H$.
\end{enumerate}
 \end{pro}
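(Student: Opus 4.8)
The plan is to verify the commutativity of the square by checking it on the dense subgroup $j_\beta(M_\beta)$ of $\widehat{M_\beta}$, where it reduces to an identity between group homomorphisms out of $M_\beta$, and hence to an identity on the generators $t$ and $\kappa(h)$, $h\in H$. Write $j_\beta\colon M_\beta\to\widehat{M_\beta}$ and $j_\alpha\colon M_\alpha\to\widehat{M_\alpha}$ for the canonical maps, let $\iota\colon\widehat{\langle t\rangle}\ltimes_{\widehat\beta}Q\hookrightarrow\widehat{\langle t\rangle}\ltimes_{\widehat\alpha}P$ be the canonical embedding, and recall from Proposition~\ref{profiniteHNN} (applied to $\beta$ and to $\alpha$) the factorizations $\epsilon_\beta=\widehat{\epsilon_\beta}\circ j_\beta$, $\epsilon_\alpha=\widehat{\epsilon_\alpha}\circ j_\alpha$, with $\widehat{\epsilon_\beta}$ and $\widehat{\epsilon_\alpha}$ isomorphisms; these are the two vertical maps of the diagram. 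By functoriality of profinite completion, $\widehat\phi\circ j_\beta=j_\alpha\circ\phi$, so for $x\in M_\beta$ the two ways around the square send $j_\beta(x)$ to $\widehat{\epsilon_\alpha}(\widehat\phi(j_\beta(x)))=\epsilon_\alpha(\phi(x))$ and to $\iota(\widehat{\epsilon_\beta}(j_\beta(x)))=\iota(\epsilon_\beta(x))$, respectively. Since $\epsilon_\alpha\circ\phi$ and $\iota\circ\epsilon_\beta$ are homomorphisms $M_\beta\to\widehat{\langle t\rangle}\ltimes_{\widehat\alpha}P$, it suffices to prove $\epsilon_\alpha(\phi(x))=\iota(\epsilon_\beta(x))$ on generators; commutativity on all of $\widehat{M_\beta}$ then follows because $j_\beta(M_\beta)$ is dense and all maps are continuous.

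I would then dispatch the two kinds of generators. For $x=t$ this is immediate, since $\phi(t)=t$ and $\epsilon_\alpha$, $\epsilon_\beta$, $\iota$ all fix $t$. For $x=\kappa(h)$ with $h\in H$ we have $\phi(\kappa(h))=\kappa(h)$, so $\epsilon_\alpha(\phi(\kappa(h)))=i(h)=\lim_n\alpha^{n!}(h)$ computed in $\widehat F$, whereas $\epsilon_\beta(\kappa(h))=i_H(h)=\lim_n\beta^{n!}(h)$ computed in $\widehat H$. Because $F$ is LERF, $\widehat H=\widetilde H$ sits in $\widehat F$ as a closed subgroup carrying the subspace topology; combined with $\beta^{n!}(h)=\alpha^{n!}(h)$, this shows the two limits agree, and $\iota$ is precisely the inclusion $Q\subseteq P$, so $\iota(i_H(h))=i(h)$, as needed.

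For the two consequences I would argue as follows. Commutativity yields $\widehat\phi=\widehat{\epsilon_\alpha}^{-1}\circ\iota\circ\widehat{\epsilon_\beta}$, a composite of two isomorphisms and the injection $\iota$; hence $\widehat\phi$ is injective, which is (1), and $\widehat\phi$ is surjective if and only if $\iota$ is, i.e.\ if and only if $Q=P$. To match this with the condition $j\circ\kappa(F)\le\overline H$, I would transport everything through the homeomorphism $\widehat{\epsilon_\alpha}$: it sends $j\circ\kappa(F)$ to $i(F)$, which is dense in $P$, and it sends $j\circ\kappa(H)$ to $i(H)$, which is dense in $Q$ by the density statement in the proof of Proposition~\ref{profiniteHNN} applied to $\beta$; since $Q$ is closed, $\widehat{\epsilon_\alpha}(\overline H)=\overline{i(H)}=Q$. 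Therefore $j\circ\kappa(F)\le\overline H$ if and only if $i(F)\subseteq Q$, if and only if $P=\overline{i(F)}\subseteq Q$, if and only if $Q=P$ — which is exactly the surjectivity of $\widehat\phi$.

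I do not anticipate a real obstacle: once Proposition~\ref{profiniteHNN} is available, this is a diagram chase. The one point requiring genuine care is the bookkeeping of the several completions and closures — making the LERF identification $\widehat H=\widetilde H\subseteq\widehat F$ so that $Q\subseteq P$ and $i_H=i|_H$ are literal equalities, and keeping track of where each limit and each closure is taken — since the equivalence between the statements $Q=P$ and $j\circ\kappa(F)\le\overline H$ rests on these identifications together with the density of $i(F)$ in $P$ and of $i(H)$ in $Q$.
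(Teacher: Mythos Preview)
Your proof is correct and follows the same approach as the paper, whose own proof simply asserts that the commutativity follows directly from the definitions of $\epsilon_\alpha$ and $\epsilon_\beta$ and that the two conclusions follow from the commutativity of the diagram. You have carefully unpacked these assertions---checking commutativity on generators via density, and handling the LERF identification $\widehat H\cong\widetilde H\subseteq\widehat F$ together with the density of $i(F)$ in $P$ and of $i(H)$ in $Q$---exactly the bookkeeping the paper leaves implicit.
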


\begin{proof}
The commutativity of the diagram follows directly from the definitions of $\epsilon_\alpha$ and $\epsilon_\beta$ and the two conclusions from the commutativity of the diagram.
\end{proof}
Now we are ready to prove Theorems \ref{separation} and \ref{GRHNN}.
\begin{proof}[Proof of Theorem \ref{separation}]
The ``if" direction is clear. Let us show the other direction.

 Using the isomorphism $\widehat{\epsilon_{\alpha}}$ we will identify the elements of $\widehat{M_\alpha}$ and $ \widehat {\langle t\rangle}\ltimes_{\widehat \alpha} P$. With this identification the elements of $\overline H$ correspond to the elements of $Q=\displaystyle \bigcap_{n\in N} \widehat \alpha^n(\widetilde H)$, and so, by the hypothesis of the theorem,  $\epsilon_\alpha (w)=i(w)\in Q$,  where $i$ is the map constructed in the proof of Proposition \ref{profiniteHNN} (here we see $F$ as a subgroup of $M_\alpha$ and forget about $\kappa$).

Let us first assume that $H$ is of finite index in $F$. Then,   $i(w)\in   \widetilde H$. Hence there exists $n$ such that $\alpha^{n!}(w)\in \widetilde H$,
 and so, since $\widehat H\cap F=H$,  $\alpha^{n!}(w)\in  H$.

Now assume that $H$ is arbitrary. Let $V$ be a verbal subgroup of $F$ of finite index. Then $HV$ is a $\alpha$-invariant subgroup of $F$ of finite index and $j\circ \kappa(w)\in \overline {VH}$. Thus, by above, there exists $n\in \N$ such that $\alpha^n(w)\in HV$. 

There exists a subgroup $U$ in $F$ of finite index such that $H$ is a free factor of $U$. We can find a verbal subgroup $V$ of $F$ of finite index such that $V\le U$. Then, by Kurosh's subgroup  theorem, $H$ is a free factor of $VH$. Since $HV$ is $\alpha$-invariant, one can replace $F$ by $HV$, and so  without loss of generality we can assume that $H$ is a free factor of $F$.

The  following argument   uses  ideas of the proof of  \cite[Theorem 1.2]{BS05}. The situation considered in  \cite[Theorem 1.2]{BS05} corresponds to the case where $H=\{1\}$.

Let  $\F$ be  the algebraic closure of a finite field. Let  $M_4=\Mat_{4\times 4}(\F)$ be the affine algebraic $\F$-variety corresponding to $4$ by $4$ matrices with  the ring of regular functions $\F[a_{i,j}|1\le i,j\le 4]$.

The group $H$  is a free factor of $F$. Let $x_1,\ldots, x_k$ be a set of free generators of $F$ such that the first $l$ elements $x_1,\ldots, x_l$ are free generators of $H$.  To  each point $p=(A_1,\ldots, A_k)\in M_4^k$ corresponding to $k$ invertible matrices over $\F$ we associate the representation $\tau_p:F\to \GL_4(\F)$ that sends $x_i$ to $A_i$. These points form 
an open (in the Zariski topology) subset $W$ of $M_4^k$.

Consider the ring of regular functions on $M_4^k$
$$R=\F[a_{i,j}^m|\ 1\le i,j\le 4, \ 1\le m\le k]$$ and let $X_m=(a_{ij}^m)\in \Mat_{4\times 4}(R)$.
Consider a  word $v\in F$ in a reduced form as a word in $\{x_i^{\pm 1}\}$. 
Let $X_v$ be the matrix over $R$ obtained from $v$ by substituting $x_i$ by $X_i$ and $x_i^{-1}$ by the adjoint matrix $\adj (X_i)$.  Observe that if $p\in W$ then $$X_v(p)=c_{p,v}\cdot \tau_p(v), \textrm{\ where \ } 0\ne c_{p,v}\in \F.$$

We put
$$ \Phi:M_4^k\to M_4^k, \ \left (X_1,\ldots X_k \right )\to \left  (X_{\alpha(x_1)},\ldots, X_{\alpha(x_k)}\right ).$$
Let $N_4\le M_4$ be the closed subvariety corresponding  to the matrices of type 
$$N_4=\left \{\left ( \begin{array}{cccc}
a_{1,1} & 0 & 0 & 0\\
0 &a_{1,1} & 0 & 0 \\
0 & 0 & a_{3,3} & a_{3,4}\\
0 & 0 &a_{4,3} & a_{4,4} \end{array}
\right ): a_{ij}\in \F\right\}.$$
  \begin{claim}
We have that $\Phi(N_4^l\times M_4^{k-l})\subseteq N_4^l\times M_4^{k-l}$.
\end{claim}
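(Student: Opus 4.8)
The plan is to read off the claim from two facts: that the subvariety $N_4\subseteq M_4$ is a unital subalgebra of the matrix algebra which is in addition stable under the adjugate map, and that the chosen free generators of $H$ sit at the front of those of $F$ while $H$ is $\alpha$-invariant.

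First I would record the needed structure of $N_4$. Writing an element of $N_4$ in $2\times 2$ block form as $\left(\begin{smallmatrix} aI_2 & 0\\ 0 & B\end{smallmatrix}\right)$ with $a\in\F$, $B\in\Mat_{2\times 2}(\F)$, it is immediate that $N_4$ is closed under sums, closed under products (the top-left blocks multiply to a scalar matrix, the bottom-right blocks to an arbitrary $2\times 2$ matrix, and the off-diagonal blocks stay zero) and contains $I_4$. For the adjugate one uses the block identity $\adj\left(\begin{smallmatrix} P & 0\\ 0 & Q\end{smallmatrix}\right)=\left(\begin{smallmatrix} \det(Q)\,\adj(P) & 0\\ 0 & \det(P)\,\adj(Q)\end{smallmatrix}\right)$, valid as a polynomial identity in all $2\times 2$ blocks $P,Q$; specializing $P=aI_2$ gives $\adj\left(\begin{smallmatrix} aI_2 & 0\\ 0 & B\end{smallmatrix}\right)=\left(\begin{smallmatrix} (a\det B)I_2 & 0\\ 0 & a^2\adj(B)\end{smallmatrix}\right)\in N_4$. (Alternatively, $\adj(A)=\det(A)\,A^{-1}$ on the Zariski-dense set of invertible matrices in $N_4$, and $N_4$ is Zariski-closed, so $\adj(N_4)\subseteq N_4$.)

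Then I would conclude as follows. Since $x_1,\dots,x_l$ freely generate $H$ and $\alpha(H)\le H$, for each $i\le l$ the element $\alpha(x_i)$ is a reduced word in $x_1^{\pm1},\dots,x_l^{\pm1}$ alone; hence $X_{\alpha(x_i)}$ is a product of matrices among $X_1,\dots,X_l,\adj(X_1),\dots,\adj(X_l)$ (and $X_{\alpha(x_i)}=I_4$ in the degenerate case, which in fact cannot occur since $\alpha$ is injective, but in any event $I_4\in N_4$). Evaluating at a point $p=(A_1,\dots,A_k)$ with $A_1,\dots,A_l\in N_4$ and $A_{l+1},\dots,A_k\in M_4$ arbitrary: for $i\le l$, $X_{\alpha(x_i)}(p)$ is a product of matrices of the form $A_j$ or $\adj(A_j)$ with $j\le l$, hence lies in $N_4$ by the closure properties above; for $i>l$ nothing is needed. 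Thus $\Phi(p)=\big(X_{\alpha(x_1)}(p),\dots,X_{\alpha(x_k)}(p)\big)\in N_4^l\times M_4^{k-l}$, proving $\Phi(N_4^l\times M_4^{k-l})\subseteq N_4^l\times M_4^{k-l}$.

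The argument is essentially formal; the only point deserving attention is that points of $M_4^k$ need not be invertible, so words in $F$ must be realized via $\adj(X_i)$ rather than $X_i^{-1}$ — which is exactly why stability of $N_4$ under adjugation (and not merely under inversion) is the property that is actually used, and the block computation of $\adj$ above is the sole non-tautological step.
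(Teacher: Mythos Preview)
Your proof is correct and follows the same approach as the paper. The paper's proof is a single sentence noting that $\alpha$-invariance of $H$ forces $X_{\alpha(x_i)}$ (for $i\le l$) to be a matrix over $\F[a_{i,j}^m\mid 1\le m\le l]$, leaving implicit the point you spell out: that $N_4$ is a unital subalgebra of $M_4$ closed under the adjugate map, so substituting elements of $N_4$ for $X_1,\dots,X_l$ in such a product lands back in $N_4$. Your version simply makes explicit the closure properties that the paper takes for granted.
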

\begin{proof}
This follows from the fact that $H$ is $\alpha$-invariant, and so $X_{\alpha(x_i)}$ are matrices over the ring $\F[a_{i,j}^m|\ 1\le i,j\le 4,\  1\le m\le l]$.
\end{proof}
The dimension of the variety $N_4^l\times M_4^{k-l}$ is equal to $n=5\cdot l+16\cdot (k-l)$.
We denote by $V$ the closure of $\Phi^n(N_4^l\times M_4^{k-l})$ in $N_4^l\times M_4^{k-l}$ with respect to the Zariski topology. 

Let $S$ be the ring of regular functions on $N_4^l\times M_4^{k-l}$, i.e. the quotient of the algebra $R$ by the ideal generated by 
$$\{a_{1,1}^m-a_{2,2}^m, a_{1,2}^m, a_{2,1}^m, a_{3,j}^m, a_{j,3}^m, a_{4,j}^m, a_{j,4}^m\colon 1\le m\le l; j=1,2\},$$ and let $\overline {X_i}$ be the image of $X_i$ in $\Mat_4(S)$. Let $Q$ be the ring of fractions of $S$. Consider the representation $\tau: F\to \GL_4(Q)$ that sends $x_i$ to $\overline{X_i}$. Observe that  if  for any ring $R$ we put 
$$P(R)= \left \{\left ( \begin{array}{cccc}
a_{1,1} & 0 & 0 & 0\\
0 &a_{1,1} & 0 & 0 \\
0 & 0 & a_{3,3} & a_{3,4}\\
0 & 0 &a_{4,3} & a_{4,4} \end{array}
\right ): a_{i,j}\in R\right \},$$
then $\tau(H)\le P(Q)$. Note also that for every $p\in (N_r^l\times M_r^{k-l})\cap W$ and  $v\in F$
$ \tau(v)(p)=\tau_p(v).$

\begin{claim} \label{duda}
We have that  $$H= \{v\in F: \tau(v)\in   P(Q)\}.$$
\end{claim}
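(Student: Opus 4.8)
The plan is to prove the two inclusions separately. The inclusion $H\subseteq\{v\in F:\tau(v)\in P(Q)\}$ was already observed just before the claim: since $H$ is $\alpha$-invariant and generated by $x_1,\dots,x_l$, every $\overline{X_i}$ with $i\le l$ lies in $P(S)$, and $P(Q)$ is a subgroup of $\GL_4(Q)$, so $\tau(H)\le P(Q)$. So the content is the reverse inclusion: if $v\in F\setminus H$ then I must exhibit the fact that $\tau(v)$ has a nonzero off-block entry, i.e. $\tau(v)\notin P(Q)$. The natural strategy is to evaluate at a well-chosen point $p\in W$: since $Q$ is the fraction field of the coordinate ring $S$ of the (irreducible components of the) variety $N_4^l\times M_4^{k-l}$, an entry of $\tau(v)$ that vanishes identically on this variety is zero in $S$, hence in $Q$; conversely, if some entry of $\tau(v)$ is a nonzero element of $S$, it is nonzero at a generic point, so I only need \emph{one} point $p$ in the Zariski-dense open set $(N_4^l\times M_4^{k-l})\cap W$ at which $\tau_p(v)=\tau(v)(p)$ fails to be block-diagonal of the prescribed shape.

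Thus the heart of the argument is a separation statement in the spirit of \cite{BS05}: for $v\in F\setminus H$, one must build a finite field $\F_0$ and matrices $A_1,\dots,A_l\in P(\F_0)$, $A_{l+1},\dots,A_k\in\GL_4(\F_0)$ such that the resulting representation $\tau_p\colon F\to\GL_4(\F_0)$ sends $H$ into $P(\F_0)$ but sends $v$ outside $P(\F_0)$. Here is how I would get such a $\tau_p$: $H$ is a free factor of $F$, so write $F=H*H'$ where $H'=\langle x_{l+1},\dots,x_k\rangle$. The subgroup $P(\F_0)$ has the block-diagonal form $\GL_1\times\GL_2$ sitting inside $\GL_4$ via $a\mapsto\mathrm{diag}(a,a)\oplus B$; the upper-left $2\times 2$ block is scalar, so $P$ is the centralizer-type subgroup fixing the decomposition $\F_0^4=U\oplus V$ with $\dim U=\dim V=2$ \emph{together with} a chosen vector-line structure on $U$ making $\GL(U)$ act as scalars — equivalently, $P$ preserves $U$, preserves $V$, and acts as a scalar on $U$. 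I would first choose a faithful representation of the (virtually free, residually finite) group $F$ of the right shape: since $H$ is a free factor, take a representation $\rho_H\colon H\to P(\F_0)$ whose restriction is faithful enough to detect the $H$-coset of $v$, extend arbitrarily on $H'$ by any $4$-dimensional representation, and use residual finiteness of free groups to separate $v$ from $H$ — more precisely, since $F$ is LERF, $H$ is closed in the profinite topology of $F$, so there is a finite quotient $q\colon F\to\bar F$ with $q(v)\notin q(H)$, and one then needs to realize $\bar F$ (or a convenient extension) inside $\GL_4(\F_0)$ carrying $H$ into $P$; this is arranged by a permutation/induced-representation construction, choosing the $4$ coordinates so that the $H$-part respects the block structure.

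The main obstacle I expect is exactly this realization step: it is not enough to separate $v$ from $H$ abstractly; I need the separating finite-dimensional representation to have the rigid block-diagonal shape $P(\F_0)$ on $H$ while genuinely leaving that shape on $v$, and simultaneously to be compatible with the dynamics of $\Phi$ (so that the earlier claim about $\Phi$ preserving $N_4^l\times M_4^{k-l}$ and the dimension/iteration bound $n=5l+16(k-l)$ can be invoked). I would handle the compatibility with $\alpha$ by noting that $\alpha$-invariance of $H$ already guarantees $X_{\alpha(x_i)}$ for $i\le l$ are matrices over the subring generated by the first $l$ tuples of coordinates — this is the content of the first claim — so the point $p$ automatically maps under $\Phi$ into the right locus; the delicate part is purely the one-point separation, and there I would follow the Baumslag–Sapir argument in \cite[proof of Theorem 1.2]{BS05} verbatim, inserting the subgroup $H$ wherever they have the trivial subgroup and checking that each step (the choice of finite field, the adjoint-matrix normalization $X_v(p)=c_{p,v}\tau_p(v)$, and the passage to the Zariski closure $V$ of $\Phi^n(N_4^l\times M_4^{k-l})$) goes through with $P$ in place of the whole matrix algebra. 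Once the one-point separation is in hand, the claim follows: $v\notin H\Rightarrow\tau(v)(p)\notin P(\F_0)$ for a dense set of $p$, hence the corresponding entry of $\tau(v)$ is a nonzero element of $S\subseteq Q$, hence $\tau(v)\notin P(Q)$.
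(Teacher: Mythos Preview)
Your reduction is right: the content of the claim is that for every $v\in F\setminus H$ one can produce a point $p\in (N_4^l\times M_4^{k-l})\cap W$ with $\tau_p(v)\notin P(\F)$. But your proposed construction of such a point has a genuine gap. The LERF route --- separate $v$ from $H$ in a finite quotient $\bar F$ and then ``realize $\bar F$ inside $\GL_4(\F_0)$ with $H$ landing in $P$'' --- does not work in fixed dimension $4$: the separating finite quotient can be arbitrarily large, and there is no mechanism to embed it (or even represent it suitably) in $\GL_4$ so that the image of $H$ sits inside the very constrained subgroup $P(\F)$ (upper $2\times 2$ block \emph{scalar}) while the image of $v$ does not. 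Your fallback, ``follow the argument of \cite{BS05} verbatim with $H$ in place of $\{1\}$'', also does not go through: in the Borisov--Sapir situation one only needs $\tau_p(v)$ to be nontrivial (or non-scalar), which is easy because two generic $4\times 4$ invertible matrices generate a free group; here one must force $\tau_p(v)$ outside a fixed proper subgroup while simultaneously keeping all of $\tau_p(H)$ inside it, and \cite{BS05} contains no such mechanism. Finally, your discussion of ``compatibility with the dynamics of $\Phi$'' is a red herring for this particular claim: the statement is purely about the generic representation $\tau$ and has nothing to do with $\alpha$ or $\Phi$.

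The paper supplies the missing idea as follows. Write $F=H*T$ and put $v$ in reduced free-product form $v=t_1h_1\cdots h_{k-1}t_k$ with $t_i\in T\setminus\{1\}$ and $h_i\in H\setminus\{1\}$. The key observation is that $P(\F)$ is contained in the centraliser in $\GL_4(\F)$ of the diagonal matrix $b=\diag(b_{1,1},1,1,1)$ for any $b_{1,1}\notin\{0,1\}$; hence to get $\tau_p(v)\notin P(\F)$ it suffices to make $\tau_p(v)$ fail to commute with $b$. Choose any $f\colon H\to P(\F)$ sending each $h_i$ to a non-central element $a_i$, and form the \emph{generalised word}
\[
u(t_1,\dots,t_k)=b\,t_1a_1t_2\cdots a_{k-1}t_k\,b^{-1}\,t_k^{-1}a_{k-1}^{-1}\cdots a_1^{-1}t_1^{-1}
\]
in the free variables $t_i$ with constants $a_i,b\in\GL_4(\F)$. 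By Tomanov's theorem on generalised group identities in linear groups \cite[Theorem~5]{To84}, such a nontrivial generalised word is not identically $1$ on $\GL_4(\F)$; so there exist $g(t_i)\in\GL_4(\F)$ with $u\neq 1$, i.e.\ the resulting $\tau_p(v)$ does not commute with $b$ and hence lies outside $P(\F)$. The two ingredients you are missing are exactly this centraliser trick and the appeal to \cite{To84}.
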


\begin{proof}
 We have to show that for every $v\not \in H$, there exists $p\in N_4^l\times M_4^{k-l}$ such that $\tau_p(v)\not \in P(\F) $. 
 
 Write $F=H*T$.
 and let $v=h_0t_1h_1\ldots h_{k-1}t_k h_k$, where $k>0$, with $h_i\in H$ for $i=0,\ldots, k$, $1\ne t_i\in T$ for $i=1,\ldots , k$ and $h_i\ne 1 $ for $i=1,\ldots , k-1$. We can also assume that $h_0=h_k=1$. Therefore
 $v= t_1h_1\ldots h_{k-1}t_k$. 
 
 We can find a homomorphism $f:H\to P(\F)$ such that $f(h_i)\not  \le Z(\GL_4(\F))$ for $i=1,\ldots, k-1$. Put $a_i=f(h_i)$. Let
 $$b=\left ( \begin{array}{cccc}
b_{1,1} & 0 & 0 & 0\\
0 &1 & 0 & 0 \\
0 & 0 & 1 & 0\\
0 & 0 &0 & 1\end{array}
\right ),$$
with $b_{1,1}\in \F\setminus \{0,1\}$. We consider the generalized word

$$u(t)=   bt_1a_1t_2 \ldots a_{k-1} t_kb^{-1}t_k^{-1}a_{k-1}^{-1}\ldots t_2^{-1}a_1^{-1}t^{-1}_1.$$
 By \cite[Theorem 5]{To84}, there exists a homomorphism $g:T\to   \GL_4(\F)$ such that 
 $$ bg(t_1)a_1g(t_2) \ldots a_{k-1} g(t_k)b^{-1}g(t_k)^{-1}a_{k-1}^{-1}\ldots g(t_2)^{-1}a_1^{-1}g(t_1)^{-1} \ne 1.$$ In particular, $g(t_1)a_1g(t_2) \ldots a_{k-1}g(t_k)$ does not commute with $b$, and hence,
 $$g(t_1)a_1g(t_2) \ldots a_{k-1}g(t_k) \not \in P(\F).$$
 Thus, the representation, that is equal to $f$ when restricted to $H$ and to $g$ when restricted to $T$,   does not send $v$ to $P(\F)$.
\end{proof}
We put $$Z=\{p\in V:   X_w(p)\in P(\F)  \}.$$
If $Z\ne V$, then by  \cite[Theorem 1.4]{BS05}, there exists a quasi-fixed point $p\in (V\setminus Z)\cap W$ for $\Phi$. Thus a  power (say $q$) of $\Phi$ fixes $p$ and,    as in \cite[Lemma 2.2]{BS05}, we can construct a map $\theta: M_\alpha \to \PGL_4(\F)\wr C_q$ with finite image.  Observe that this map separates $w$ and $H$, i.e. $\theta(w)\not \in \theta(H)$, because the image of $\tau_p(H)\le P(\F)$  and $\tau_p(w)=c_{p,w}^{-1}X_w(p)\not \in  P(\F)$. But, since $j\circ \kappa(w)\in \overline H$, this is impossible. Thus $Z=V$. 

Let $p\in (N_r^l\times M_r^{k-l})\cap W$. Then  we have
$$\tau(\alpha^n(w))(p)= \tau_p(\alpha^n(w))=c_{p,\alpha^n(w)} ^{-1}\cdot  X_{\alpha^n(w)}(p)= c_{p,\alpha^n(w)} ^{-1}\cdot   X_w(\Phi^n(p)).$$ Thus, 
for all $p\in  (N_r^l\times M_r^{k-l})\cap W$ we have that
$ \tau(\alpha^n(w))(p)\in P(\F)$, 
and so, since $(N_r^l\times M_r^{k-l})\cap W$ is dense in $N_r^l\times M_r^{k-l}$,
$  \tau(\alpha^n(w))\in P(Q)$.
Therefore, by Claim \ref{duda}, $ \alpha^n(w) \in  H$.  \end{proof}
  
  \begin{proof}[Proof of Theorem \ref{GRHNN}]  Let $F$ be a finitely generated free group and $\alpha:F\to F$ an injective endomorphism of $F$.  Let $\Gamma=M_\alpha$. Assume that $\phi:\Lambda \hookrightarrow \Gamma$ is a non-trivial Grothendieck pair.  
  Let $\pi:\Gamma\to \Z$ be the canonical projection on $\Z$. Then, since $\widehat \phi$  is onto,  $\pi\circ \phi(\Lambda)=\Z$. Thus, without loss of generality we can assume that $t\in \phi(\Lambda)$.
  It follows from \cite[Proposition 2.3]{FH99}, that there are finite subsets $A$ and  $B$   of $F$ such that  $\phi(\Lambda)=\langle t, A, B\rangle$ and the following holds \begin{enumerate}
  \item $\langle A,\alpha(A)\rangle =\langle A,B\rangle$;
\item with respect to the generators $t, A, B$,  $\phi(\Lambda)$ has a presentation of   the following form $\phi(\Lambda)=\langle t, A, B| C\rangle$, where 
  $C=\{tat^{-1}\alpha(a^{-1}):a\in A\}$.
\end{enumerate}
Since the profinite completions of $\Lambda$ and $\Gamma$ are isomorphic, the L\"uck approximation \cite{Lu94} implies (see, for example, \cite[Corollary 6.4]{ReAndrews}) that  their first  $L^2$-Betti numbers coincide. The first $L^2$-Betti number of $\Gamma$ is zero. 

Since the deficiency of an infinite finitely presented groups minus 1 is at least the first $L^2$-Betti number (see \cite[theorem 3.21]{Ka19}),  $B$ is empty. Thus, $\Lambda$ is isomorphic to the ascending $HNN$-extension of the free group $H=\langle A\rangle$ corresponding to the restriction of $\alpha$ on $H$.  

By Proposition \ref{subgroupHNN},  $j\circ \kappa(F)\le \overline H$. By Theorem \ref{separation} there exists $n$ such that $\alpha^n(X)\subset H$  where $X$ is a finite generating set of $F$. Hence $\alpha^n(F)\le H$, i.e $t^{n}Ht^{-n}$ contains $F$, and so $F\le \phi(\Lambda)$.  Hence $\phi$ is an isomorphism.
 \end{proof}
 \section{Grothendieck properties versus profinite properties} 
 \label{profinite}

In \cite{Ta87} Tavgen\textquotesingle\  constructed a non-trivial  Grothendieck pair $\phi:\Lambda \hookrightarrow \Gamma$ of soluble groups such that $\Lambda\cong \Gamma$. For the convenience of the reader we recall   this ingenous construction leaving the reader to complete the details or consult \cite{Ta87}.

Let $p$ be a prime. For any $m\in \Z$, write $m=r+k(p-1)$, where $0<r\le p-1$ and put $q_m=r+kp$. 
Observe that $q_m$ is coprime with $p$. Recall that $\Z_{(p)}$  denote the localization of $\Z$  at the ideal $(p)$. 
Then the group $\Gamma$ is isomorphic to a semidirect product $$\Gamma=\left (\bigoplus_{l,m\in \Z} \Z_{(p)} \right )\rtimes \left  (\Z\oplus (\Z\wr \Z) \right ),$$
 where the action of the right subgroup on the left subgroup is described in the following way:

\begin{enumerate}
\item $ \Z\oplus (\Z\wr \Z)=\langle b,c_n, d| [b,c_n]=1, [b,d]=1, dc_nd^{-1}=c_{n+1}, n\in \Z\rangle$.
\item the element $a\in\Z_{(p)}$ of the $(l,m)$ summand of $\displaystyle \bigoplus_{l,m\in \Z} \Z_{(p)}$ is denoted by $a_{l,m}$.
\item $ba_{m,l}b^{-1}=a_{m,l-1}$, $c_na_{m,l}c_n^{-1}=q_{m+n}a_{m,l}$ and $da_{m,l}d^{-1}=a_{m-1,l}$.
\end{enumerate}

In order to construct $\Lambda$ consider the map $$f:\displaystyle \bigoplus_{l,m\in \Z} \Z_{(p)}\to \displaystyle \bigoplus_{m\in \Z} \Q,\ a_{m,l}\mapsto (p^la)_m,$$ where  the element $a\in\Q$ of the $m$th summand of $\displaystyle \bigoplus_{m\in \Z} \Q$ is denoted by $a_m$. Then $\ker f$ is a normal subgroup of $\Gamma$ and we define
 $$\Lambda=\ker f \rtimes \left  (\Z\oplus (\Z\wr \Z) \right ).$$
It seems plausible that $\Gamma$ in the example of Tavgen\textquotesingle\ is profinitely rigid.  
\begin{Question}
Is there a finitely generated residually finite  profinitely rigid group which is not Grothendieck left/right rigid?
\end{Question}

 An example in reverse is easy to construct. Let 
$C_{11}=\langle a\rangle $ be a cyclic group  of order 11,  $\alpha \in \Aut(C_{11}) $ a generator of $\Aut(C_{11})$ and $\phi_k:\Z\to \Aut(C_{11})$ that sends $1$ to $\alpha^k$. Then the group $\Z\ltimes _{\phi_2} C_{11}$ is clearly   Grothendieck left/right rigid. However, it is not profinitely rigid since its profinite completion is isomorphic to the one of   $\Z\ltimes _{\phi_6} C_{11}$ and the groups  $\Z\ltimes _{\phi_2} C_{11}$ and $\Z\ltimes _{\phi_6} C_{11}$ are not isomorphic.

Clearly for $\Lambda \hookrightarrow \Gamma$ to be a Grothendieck pair is stronger than just $\widehat \Lambda \cong \widehat \Gamma$. 
The following result shows that, indeed, some properties of Grothendieck pairs do not hold for groups having isomorphic profinite completions.
\begin{pro}\label{orth}
Consider the following two quadratic forms
\begin{multline*}
q_1= x_1^2+x_2^2+x_3^2+x_4^2+x_5^2+x_6^2-\sqrt 2 x_7^2-\sqrt 2 x_8^2 \textrm{\ and\ }\\ q_2=x_1^2+x_2^2-x_3^2-x_4^2-x_5^2- x_6^2-\sqrt 2 x_7^2-\sqrt 2 x_8^2.\end{multline*}
  Then there exist finite-index subgroups $\Gamma \le \Spin(q_1)(\Z[\sqrt 2])$ and $\Lambda  \le \Spin(q_2)(\Z[\sqrt 2]) $, with  $ \widehat \Gamma  \cong \widehat \Lambda$,  while their Bohr compactifications are not isomorphic.
\end{pro}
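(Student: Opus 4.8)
The plan is to exploit the fact, due to Prasad–Rapinchuk and Aka, that arithmetic groups coming from forms over a number field can have isomorphic profinite completions without being isomorphic, and to show that in the present case the two groups are distinguished at the level of Bohr compactifications. The Bohr compactification of a finitely generated linear group $\Gamma$ over a number field records, among other things, the closure of $\Gamma$ in $\prod_v G(k_v)$ over the \emph{archimedean} places $v$; this is exactly the kind of information that profinite completion forgets and that the congruence subgroup property makes accessible. So the strategy is: (i) choose $\Gamma$ and $\Lambda$ to be $S$-arithmetic subgroups of $\Spin(q_1)$ and $\Spin(q_2)$ over $k=\Q(\sqrt2)$ that are genuinely commensurable at the level of adelic closures (hence have isomorphic profinite completions by CSP, as in the Aka–type examples), but (ii) observe that $q_1$ and $q_2$ have \emph{different signatures at the two real places} of $k$, so that the real points $\Spin(q_1)(\R\times\R)$ and $\Spin(q_2)(\R\times\R)$ are non-isomorphic Lie groups, and (iii) deduce that the Bohr compactifications differ because the Bohr compactification of a lattice in a semisimple Lie group $L$ (with property (T)) surjects onto the maximal compact image coming from the compact factors of $L$, and this surjection is an invariant that $q_1$ versus $q_2$ realize differently.

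Concretely, the key computation is with signatures. Over the two embeddings $\sqrt2\mapsto\pm\sqrt2$, the coefficient $-\sqrt2$ of $x_7^2,x_8^2$ becomes $-\sqrt2$ at one real place and $+\sqrt2$ at the other. Thus $q_1$ has signature $(6,2)$ at the first real place and $(6,2)\rightsquigarrow(8,0)$ — wait, rather $(6,2)$ becomes $(6{+}2,0)=(8,0)$? Let me phrase it as: at the place where $\sqrt2>0$, $q_1$ is positive definite of signature $(8,0)$ giving the compact group $\Spin(8)$; at the other place its signature is $(6,2)$, giving $\Spin(6,2)$. For $q_2$: at the place $\sqrt2>0$ we get signature $(2,6)\sim(6,2)$, i.e.\ $\Spin(6,2)$, and at the other place signature $(2,2,\,-1\cdot4\text{ flipped})$, namely $(4,4)$, i.e.\ $\Spin(4,4)$. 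Hence the products of real groups are $\Spin(8)\times\Spin(6,2)$ for $q_1$ versus $\Spin(6,2)\times\Spin(4,4)$ for $q_2$, which are not isomorphic — in particular the first has a nontrivial compact factor and the second does not. I would record this as a lemma and then invoke that the Bohr compactification of an irreducible lattice $\Gamma$ in a product $L=L_{\mathrm{cpt}}\times L_{\mathrm{nc}}$ of a compact and a noncompact (higher rank, or property-(T)) semisimple group contains $L_{\mathrm{cpt}}$ as the image of the closure $\overline{\Gamma}$, so the presence of the $\Spin(8)$ factor for $q_1$ forces $\B(\Gamma)$ to have a quotient isomorphic to $\Spin(8)$, whereas $\B(\Lambda)$ has no such quotient (its connected semisimple part has no $\Spin(8)$, and any compact Lie quotient of $\B(\Lambda)$ factors through a quotient of $\Lambda$ that, by superrigidity / Margulis finiteness of quotients, must come from the real places, none of which is $\Spin(8)$).

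For the profinite side, I would cite the construction: $q_1$ and $q_2$ are chosen to be in the same genus (equivalently, equivalent over every completion of $k$), or at least to give forms whose $\Spin$ groups become isomorphic over every $\hat{\Z}[\sqrt2]$-completion, so that suitable finite-index (principal congruence) subgroups $\Gamma,\Lambda$ have $\widehat\Gamma\cong\widehat\Lambda$; this is exactly the mechanism of the non-isomorphic-but-profinitely-isomorphic lattices of Aka. One must check the forms above really are locally equivalent at all finite places while differing at the real places — this is a finite Hilbert-symbol computation comparing discriminants and Hasse invariants over $k_v$, and it is the one place where I would actually grind (briefly): both forms have rank $8$, the same determinant up to squares ($(-\sqrt2)^2$ versus flipping signs of four $\pm1$'s contributes a square), and one arranges the Hasse invariants to agree at all finite $v$. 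Then one passes to deep enough principal congruence subgroups $\Gamma\le\Spin(q_1)(\Z[\sqrt2])$, $\Lambda\le\Spin(q_2)(\Z[\sqrt2])$ so that CSP identifies $\widehat\Gamma$ and $\widehat\Lambda$ with the same congruence completion.

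The main obstacle I anticipate is step (iii): justifying rigorously that the Bohr compactification distinguishes the two groups. One has to argue that every finite-dimensional unitary representation of $\Lambda$ that survives to the Bohr compactification "comes from the real places," i.e.\ that $\Spin(8)$ is not a quotient of $\B(\Lambda)$ — and here the clean tool is Margulis superrigidity for the higher-rank factors together with the description of $\B$ of an $S$-arithmetic group as $\prod_{v\in S_\infty}\bar{G(k_v)}$ times a profinite (adelic) piece, the archimedean closures being exactly the compact parts of the real Lie groups. Making that description precise and citing it correctly (e.g.\ via the known structure of the Bohr compactification of lattices, or via the fact that $\Spin(8,\R)$ has property (T) so its lattices have the same Bohr compactification as the ambient group restricted to the compact factor) is the crux; once it is in place, the non-isomorphism of $\Spin(8)\times\Spin(6,2)$ and $\Spin(6,2)\times\Spin(4,4)$ finishes the proof.
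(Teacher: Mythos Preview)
Your proposal is correct and follows essentially the same strategy as the paper: use Aka's result together with the congruence subgroup property (the paper cites \cite{Kn79} for finiteness of the congruence kernel and \cite{Ak12} for the isomorphism of congruence completions) to obtain finite-index subgroups with isomorphic profinite completions, and then distinguish the Bohr compactifications via the archimedean places. Two small remarks. First, your signature labels are swapped: at the real place with $\sqrt2>0$ the coefficient $-\sqrt2$ is negative, so $q_1$ has signature $(6,2)$ there and $(8,0)$ at the other place; your conclusion that $\Spin(q_1)$ has a compact factor $\Spin(8)$ while $\Spin(q_2)$ has none is unaffected. Second, the step you flag as the main obstacle---identifying the connected component of $\B(\Gamma)$ with the compact archimedean factor and showing that of $\B(\Lambda)$ is trivial---is exactly what the paper outsources: it invokes \cite[Theorem~3]{Be23} (Bekka), which gives precisely that the connected component of the Bohr compactification of such an arithmetic lattice is the product of the compact real factors. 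Your sketch via superrigidity is the right idea and is essentially what underlies Bekka's theorem, so either route works.
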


\begin{proof}Let us also define the quadratic forms
\begin{multline*}
\overline{q_1}= x_1^2+x_2^2+x_3^2+x_4^2+x_5^2+x_6^2+\sqrt 2 x_7^2+\sqrt 2 x_8^2 \textrm{\ and\ }\\ \overline{q_2}=x_1^2+x_2^2-x_3^2-x_4^2-x_5^2- x_6^2+\sqrt 2 x_7^2+\sqrt 2 x_8^2.\end{multline*}
For $i=1,2$, let  $G_i= \Spin(q_i)(\Z[\sqrt 2])$.
Let $\sigma_j:\Z[\sqrt 2]\to \R$ ($j=1,2$) two distinct
embeddings of   $\Z[\sqrt 2]$ into $\R$ such that $\sigma_1(\sqrt 2)=\sqrt 2$ and $\sigma_2(\sqrt 2)=-\sqrt 2$. They induce natural embeddings $$\sigma_{i,1}: G_i \hookrightarrow  \Spin(q_i)(\R)\textrm{\ and\ } \sigma_{i,2}: \Spin(q_i)(\Z[\sqrt 2]) \hookrightarrow \Spin(\overline{q_i})(\R).$$
Then we have two embeddings
$$ G_i\hookrightarrow  \Spin(q_i)(\R)\times   \Spin(\overline{q_i})(\R), \ x\mapsto(\sigma_{i,1}(x),\sigma_{i,2}(x)).$$
It is well known that these embeddings realize $\Spin(q_i)(\Z[\sqrt 2])$ as irreducible lattices.
By \cite[11.1]{Kn79} the congruence kernels of $G_i$ are finite and by \cite[Corollary 4]{Ak12}, the congruence completions of $G_i$ are isomorphic. Thus $\widehat G_1$ and $\widehat G_2$ are commensurable. Therefore,  there exist finite-index subgroups $\Gamma \le G_1$ and $\Lambda  \le G_2$
such that the profinite completion of $\Gamma$ is isomorphic to the profinite completion of $\Lambda$.

By \cite[Theorem 3]{Be23}, the connected component of the Bohr compactification of $\Gamma$ is $ \Spin(\overline{q_1})(\R)$ and  the connected component of the Bohr compactification of $\Lambda$ is trivial. \end{proof}

 Let $\mathcal P$ be a property of groups. We say that $\mathcal P$ is {\bf profinite} if for two finitely generated residually finite groups $\Lambda$ and $\Gamma$, 
 having the same profinite completion  the following holds: if $\Lambda$  satisfies $\mathcal P$, then $\Gamma$ also does.
In the last two decades, a lot of efforts were devoted to understand which properties of groups are profinite. Most of them lead to 
negative results  \cite{Ak12, Lu14, KKRS20,  KS23, Br23, EK23}. It is interesting to study which properties are shared by groups in Grothendieck pairs. We say that the property $\mathcal P$ is {\bf  up Grothendieck} if for every Grothendieck pair $\Lambda \hookrightarrow \Gamma$  if $\Lambda$  satisfies $\mathcal P$, then $\Gamma$ also does. The {\bf  down Grothendieck} property is defined symmetrically. In \cite{CB13} these two notions were called up/down weak profinite properties.

Clearly, every profinite property is both  up Grothendieck and  down Grothendieck. But the converse is not true. Corollary \ref{bohr} and Proposition \ref{orth} show that having a specific Bohr compactification is an   up/down  Grothendieck property but not a profinite one. Now, what is the relation between  down and up Grothendieck   properties?  Clearly every property of groups which is inherited by subgroups is down  Grothendick, e.g.   being  residually-$p$,  being  linear or being amenable. By \cite[Theorem 10.2]{KS23}, amenability is not  up Grothendieck. We expect that there are many properties of this sort that are not  up Grothendieck.
\begin{Questions}\
\begin{enumerate}
\item[(a)]
Is the property to be residually-$p$   up Grothendieck? 
\item [(b)]
Is the property to be linear   up Grothendieck? 
\end{enumerate}
\end{Questions}
The property to be residually-$p$ is not a profinite property by \cite{Lu14}, but we do not know whether the property to be linear is profinite. 

Finally, we notice that to be {\bf L}GR is an   up Grothendieck property and as we have seen in Proposition \ref{f2f2}, it is  not   down Grothendieck. Also there are less obvious examples. The property $(\tau)$ is    up Grothendieck; however, by \cite[Theorem 10.2]{KS23} it is not   down Grothendieck.

\section{Additional questions}

By analogy with  profinite rigidity it is natural also to consider   Bohr rigidity and  proalgebraic rigity.  A finitely generated residually finite group $\Lambda$ is called {\bf Bohr  rigid} ({\bf proalgebraic rigid})  if for any finitely generated residually finite group $\Gamma$ whose Bohr compactification (proalgebraic completion) is isomorphic to the one of $\Lambda$, $\Gamma\cong \Lambda$.  In the case of Grothendieck pairs, the ambient groups have the isomorphic Bohr compactifications.

The profinite completion $\widehat \Gamma$ of $\Gamma$ is isomorphic to the $\B(\Gamma)$ modulo its connected componenct, so if $\B(\Gamma)\cong \B(\Lambda)$, then $\widehat \Gamma\cong \widehat \Lambda$. Hence   profinite rigidity implies   Bohr rigidity. In the same way profinite rigidity implies proalgebraic rigidity.
\begin{Questions}\
\begin{enumerate}
\item[(a)] Is a finitely generated residually finite  group which is    Bohr rigid also   profinite rigid?
\item[(b)] Is a finitely generated residually finite  group group which is proalgebraic rigid also profinite rigid?
\end{enumerate}
\end{Questions}

A long-standing  question of Remeslennikov asks whether a finitely generated free group is profinitely rigid. The solution to this problem is far from being achieved by the methods that we have. Perhaps these weaker questions may be easier to handle.\begin{Question}
Is a finitely generated free group     Bohr rigid? Is a finitely generated free group     proalgebraic rigid?
\end{Question}

 \end{document}